\documentclass[12pt]{amsart}
\usepackage[colorlinks=true,citecolor=black,linkcolor=black,urlcolor=blue]{hyperref}
\usepackage{amsmath}
\usepackage[english,  activeacute]{babel}
\usepackage[utf8]{inputenc}
\usepackage{amssymb}
\usepackage{amsthm}
\usepackage{graphics,graphicx}
\usepackage{enumerate}
\usepackage{ytableau}
\usepackage{array}
\usepackage{bm}
\usepackage{cite}
\usepackage{a4wide}
\setcounter{tocdepth}{3}
\usepackage{color, url}
\usepackage{float}
\usepackage{comment}
\usepackage{xcolor}
\usepackage{color, url}
\usepackage{float}
\usepackage{pgf, tikz}

\theoremstyle{plain}
\newtheorem{theorem}{Theorem}[section]
\newtheorem{proposition}[theorem]{Proposition}
\newtheorem{corollary}[theorem]{Corollary}
\newtheorem{lemma}[theorem]{Lemma}

\theoremstyle{definition}
\newtheorem{definition}[theorem]{Definition}
\newtheorem{example}[theorem]{Example}

\usepackage[usestackEOL]{stackengine}[2013-10-15]
\stackMath

\thispagestyle{empty}

\title{Combinatorics of $q$-Mahonian numbers of type $B$ and log-concavity}

\date{\today}
\subjclass[2010]{05A05, 05A15, 05A19, 05A30.}
\keywords{Mahonian numbers, Mahonian numbers of type $B$, log-concavity, $q$-log-concavity, unimodality.}

\begin{document}

\author[A. Kessouri]{Ali Kessouri}
\address{Department of Mathematics, University of Ferhat Abbas, Setif 1, LMFN Laboratory, Setif, Algeria}
\email{ali.kessouri@ensta.edu.dz}

\author[M. Ahmia]{Moussa Ahmia}
\address{University of Mohamed Seddik Benyahia, LMAM laboratory, Jijel, Algeria}
\email{moussa.ahmia@univ-jijel.dz;
ahmiamoussa@gmail.com}

\author[H. Arslan]{Hasan Arslan}
\address{Department of Mathematics, Faculty of Science, Erciyes University, 38039, Kayseri,
Turkey}
\email{hasanarslan@erciyes.edu.tr}

\author[S. Mesbahi]{Salim Mesbahi}
\address{Department of Mathematics, University of Ferhat Abbas, Setif 1, LMFN Laboratory, Setif, Algeria}
\email{salim.mesbahi@univ-setif.dz}

\newcommand{\nadji}[1]{\mbox{}{\sf\color{green}[Kessouri: #1]}\marginpar{\color{green}\Large$*$}} 

\newcommand{\ahmia}[1]{\mbox{}{\sf\color{magenta}[Ahmia: #1]}\marginpar{\color{magenta}\Large$*$}}

\begin{abstract}
This paper is a continuation of earlier work of Arslan \cite{Ars}, who introduced the Mahonian number of type $B$ by using a new statistic on the hyperoctahedral group $B_{n}$, in response to questions he suggested in his paper entitled "{\it A combinatorial interpretation of Mahonian numbers of type $B$}"  published in \url{https://arxiv.org/abs/2404.05099v1}. We first give the Knuth-Netto formula and  generating function for the subdiagonals on or below the main diagonal of the Mahonian numbers of type $B$, then its combinatorial interpretations by lattice path/partition and tiling. Next, we propose a $q$-analogue of Mahonian numbers of type $B$ by using a new statistics on the permutations of the hyperoctahedral group $B_n$ that we introduced, then we study their basic properties and their combinatorial interpretations by lattice path/partition and tiling. Finally, we prove combinatorially that the $q$-analogue of Mahonian numbers of type $B$ form a strongly $q$-log-concave sequence of polynomials in $k$, which implies that the Mahonian numbers of type $B$ form a log-concave sequence in $k$ and therefore unimodal.
\end{abstract}

\maketitle
\section{Introduction}
Permutation is one of the fundamental notions in combinatorics for problems of enumeration and discrete probabilities. It is used to define and study classic problems such as the magic square, the Latin square and Rubik's Cube. Permutations play an important role in group theory, determinant theory and the theory of symmetric functions. For more details about the permutations, see \cite{bona,Sta86}.

\bigskip

The modern study of permutation statistics began with the work of MacMahon \cite{Mac17}. These statistics have been extensively studied since the 70s by Foata and Schützenberger \cite{FS70,FZ90} and Françon \cite{Fra76}. The latter, together with Viennot \cite{FV79}, constructed a bijection linking the world of paths and permutations. The study of statistics has led to many results, such as the definition of Solomon's algebra of descents \cite{Sol76} thanks to the positions of the descents of a permutation.

\medskip

The most well-known permutation statistic is the number of inversion.  The concept of inversions was introduced by Cramer \cite{Cra} in 1750, and is used in the determinant formula of an $n\times n$ matrix as follows:
\[
\det\left(
  \begin{array}{ccc}
    a_{11} & \cdots & a_{1n} \\
    \vdots & \ddots & \vdots \\
    a_{a1} & \cdots & a_{nn} \\
  \end{array}\right) =\sum (-1)^{inv(\sigma_1 \cdots \sigma_n)}a_{1\sigma_1}\cdots a_{n\sigma_n},
\]
summed over all permutations $\sigma_1 \cdots \sigma_n$ of $\{1,\ldots,n\}$ where $inv(\sigma_1 \cdots \sigma_n)$
is the number of inversions of the permutation.

\medskip

As an analogue of the classical statistic inversion, Arslan et al. \cite{Ars2} introduced an inversion statistic on the hyperoctahedral
group $B_n$ by using a decomposition of a positive root system of this reflection group. This statistic allows Arslan to introduce in his paper \cite{Ars} the Mahonian numbers of type $B$.

\medskip

In our paper, we are interested in studying another combinatorial properties and interpretations of Mahonian numbers of type $B$, an analogue of these numbers with their combinatorial interpretations, then the log-concavity property of these numbers.

\medskip

The paper is structured as follows. In Sect. \ref{intro}, we give some definitions and notations that we need in this paper. In Sect. \ref{Cf}, we give the Knuth-Netto formula and  generating function for the subdiagonals on or below the main diagonal of the Mahonian number of type $B$, then its combinatorial interpretations by lattice path/partition and tiling. In Sect. \ref{AM}, we propose a $q$-analogue of Mahonian numbers of type $B$ by using a new statistics on the permutations of the hyperoctahedral group $B_n$ that we defined, then we study their basic properties and their combinatorial interpretations by lattice path/partition and tiling. In Sect. \ref{qlg}, we prove combinatorially that the $q$-analogue of Mahonian numbers of type $B$ form a strongly $q$-log-concave sequence of polynomials in $k$, which implies that the Mahonian numbers of type $B$ form a log-concave sequence in $k$ and therefore unimodal. Finally, in Sect. \ref{QR}, we propose a conjecture on the log-concavity of the sequence $\left( i_{B}(n,k)\right)_{n}$ (resp. the strong $q$-log-concavity of the sequence of polynomials $\left( i_{B_q}(n,k)\right)_{n}$)  in $n$, and a question about the number and location of modes of the unimodal sequence $(i_B(n,k))_{0\leq k \leq n^2}$.

\medskip

\section{Preliminaries and notations}
\label{intro}
Let $[n]:=\{1,\ldots,n\}$ and $\langle n \rangle:=\{-n,\ldots,-1,1,\ldots, n \}$. The {\bf symmetric group} $\mathcal{S}_n$  is the group of  permutations of $[n]$. For $\pi=\left(
  \begin{array}{ccc}
    1 & \cdots & n \\
    \pi_1 & \cdots & \pi_n \\
  \end{array}\right) \in \mathcal{S}_n$, the one-line presentation of $\pi$ is $\pi=\pi_1\cdots\pi_n$ to mean that $\pi_j=\pi(j)$ for all $j=1,\ldots,n$. The {\bf hyperoctahedral group} or {\bf signed symmetric group} $B_n$, which contains the symmetric group $\mathcal{S}_n$ as a subgroup, is the group of signed permutations of $\langle n \rangle$ such that $\pi(-j)=-\pi(j)$ for all $j\in [n]$. The hyperoctahedral groups are well studied (cf. \cite{r1, r2, r5, r6, r7, r11}). It is well known that $\mathcal{S}_n$ and $B_n$ are special cases of {\bf Coxeter groups}. For the combinatorics of the last groups, we refer the readers to the book of Björner and Brenti \cite{Bren}.

\bigskip

Now, we give some well-known statistics associated to the classical symmetric group.
\begin{definition}
Let $\sigma \in \mathcal{S}_n$.
\begin{itemize}
    \item An {\bf inversion} of $\sigma$ is a pair of indices $(i,j)$ such that $1\leq i <j\leq n$ and $\sigma(i) > \sigma(j)$. For the set of the inversions, we write
     \[Inv(\sigma)=\{ (i,j)\in [n]\times[n]: i<j \text{\ but \ }  \sigma(i)>\sigma(j)\},\]
    and $inv(\sigma)=|Inv(\sigma)|$ for the number of inversions of $\sigma$.
    \item A {\bf descent} of $\sigma$ is an index $j\in [n-1]$ such that $\sigma(j)>\sigma(j+1)$. For the set of descents, we write
    \[Des(\sigma)=\{ j\in [n-1]: \sigma(j)>\sigma(j+1)\},\]
    and $des(\sigma)=|Des(\sigma)|$ for the number of descents of $\sigma$.
    \item The {\bf major index} $maj(\sigma)$ of $\sigma$ is the sum of its descents:
    \[maj(\sigma)=\sum_{j\in Des(\sigma)}j.\]
\end{itemize}
\end{definition}

MacMahon proved in \cite{Mac} that the inversion statistic ”$inv$” is equi-distributed
with the major index ”$maj$” over the symmetric group $\mathcal{S}_n$, that is,
\begin{equation}\label{Im}
 \sum_{\sigma \in \mathcal{S}_n}q^{inv(\sigma)}=\sum_{\sigma \in\mathcal{S}_n}q^{maj(\sigma)}=\prod_{i=1}^{n}\frac{1-q^i}{1-q}:=[n]_q!  
\end{equation}
where $q$ is an indeterminate and $[n]_q!=[1]_q\cdots[n]_q$ is the $q$-analogue of $n!$.

\medskip

Let $I(n,k)=\{\sigma \in \mathcal{S}_n: inv(\sigma)=k\}$ for any two integers $n\geq 1$ and  $0\leq k\leq \binom{n}{2}$. Let $i(n,k)=| I(n,k)|$ the number of permutations of length $n$ having $k$ inversions, and is called as the {\bf Mahonian number} which was introduced for the first time by Rodrigues \cite{Red}. 

\bigskip

Yousra and Ahmia \cite{GA1} proved that $i(n,k)$ counts "the number of different ways of distributing "$k$" balls among "$n-1$"  boxes such that the $j$th box contains at most "$j$" balls".

\medskip

We have $\sigma=\left(
  \begin{array}{cccc}
    1 & 2 &\cdots & n \\
    n & n-1 & \cdots & 1 \\
  \end{array}\right)$ is the permutation with the maximum number of inversions in the group symmetric $\mathcal{S}_n$, i.e., $inv(\sigma)=\binom{n}{2}$. Then, we can express \eqref{Im} as follows:
\begin{equation}
\sum_{k=0}^{\binom{n}{2}}i(n,k)z^k=(1+z)\cdots (1+z+\cdots+z^{n-1}),
\end{equation}
where $i(n,k)=0$ unless $0\leq k\leq \binom{n}{2}$. For other properties and relations of $i(n,k)$, see \cite{Ind}. 

\medskip

The $q$-analogue of Mahonian numbers was first introduced by Ghemit and Ahmia in \cite{GA1}. They noted this $q$-analogue by $i_q (n,k)$, and they established \cite[Theorem 5]{GA1} that $i_q (n,k)$ satisfies the following generating function:
\begin{equation}
\sum_{k=0}^{\binom{n}{2}}i_q(n,k)z^k=\prod_{j=0}^{n-1}(1+q^jz+\cdots+(q^jz)^j)
\end{equation}
where $i_q(n,0)=1$, $i_q(n,k)=0$ unless $0\leq k\leq \binom{n}{2}$ and $i_{1}(n,k)=i(n,k)$.
Then, they provided \cite{GA1} its lattice path and tilling interpretations. Moreover, they investigated some combinatorial properties of $i_q(n,k)$ such as $q$-log-concavity property.

\medskip

Due to Ghemit and Ahmia \cite{GA1}, the $q$-Mahonian numbers have the following relations:

\begin{theorem}[Ghemit and Ahmia, \cite{GA1}]For $n>1$ and $0\leq k \leq \binom{n}{2}$, we have 
\begin{equation}\label{qrl1}
i_q(n,k)=\sum_{j=0}^{n-1}q^{j(n-1)}i_q(n-1,k-j),
\end{equation}   
\begin{equation}\label{qrl2}
i_q(n,k)=i_q(n-1,k)+q^{n-1}i_q(n-1,k-1)-q^{n(n-1)}i_q(n-1,k-n)
\end{equation}
and
\begin{equation}\label{qrl3}
i_q(n,k)=q^{\frac{n(n-1)(2n-1)}{6}}i_{\frac{1}{q}}\left(n,\binom{n}{2}-k\right).
\end{equation}
\end{theorem}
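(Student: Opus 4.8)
The plan is to prove the three identities \eqref{qrl1}, \eqref{qrl2}, \eqref{qrl3} directly from the product-form generating function
\begin{equation*}
F_n(z):=\sum_{k\ge 0}i_q(n,k)z^k=\prod_{j=0}^{n-1}\bigl(1+q^jz+\cdots+(q^jz)^j\bigr).
\end{equation*}
For \eqref{qrl1}, I would isolate the last factor, $j=n-1$, which is $1+q^{n-1}z+\cdots+(q^{n-1}z)^{n-1}=\sum_{j=0}^{n-1}q^{j(n-1)}z^{j}$, and observe that the remaining product is exactly $F_{n-1}(z)$. Hence $F_n(z)=\bigl(\sum_{j=0}^{n-1}q^{j(n-1)}z^{j}\bigr)F_{n-1}(z)$; reading off the coefficient of $z^k$ on both sides gives \eqref{qrl1} immediately.

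For \eqref{qrl2}, the idea is to telescope the geometric-like sum $\sum_{j=0}^{n-1}q^{j(n-1)}z^{j}$. Multiplying it by $(1-q^{n-1}z)$ collapses it to $1-q^{n(n-1)}z^{n}$ (since $q^{j(n-1)}z^j\cdot q^{n-1}z=q^{(j+1)(n-1)}z^{j+1}$, and the $j=n-1$ term contributes $q^{n(n-1)}z^{n}$). Therefore $(1-q^{n-1}z)F_n(z)=(1-q^{n(n-1)}z^{n})F_{n-1}(z)$, and extracting the coefficient of $z^k$ yields exactly \eqref{qrl2}. Alternatively one can derive \eqref{qrl2} purely from \eqref{qrl1} by the same telescoping applied to the recurrence, but the generating-function route is cleaner.

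For the symmetry \eqref{qrl3}, I would use the substitution $z\mapsto q^{-2}z^{-1}$ (or track the exponents factor by factor). In the $j$-th factor, $1+q^jz+\cdots+(q^jz)^j$, replacing $z$ by $q^{-2}z^{-1}$ gives $\sum_{i=0}^{j}q^{-ji}z^{-i}=z^{-j}q^{-j^2}\sum_{i=0}^{j}q^{j i}z^{i}$, which is $z^{-j}q^{-j^2}$ times the corresponding factor of $F_n(z)$ evaluated at $1/q$ in the role of $q$. Taking the product over $j=0,\dots,n-1$ produces a global prefactor $z^{-\binom{n}{2}}q^{-\sum_{j=0}^{n-1}j^2}=z^{-\binom{n}{2}}q^{-\frac{(n-1)n(2n-1)}{6}}$. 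Rearranging, comparing coefficients of $z^k$, and using that the coefficients of $F_n$ at parameter $1/q$ are the $i_{1/q}(n,\cdot)$ gives \eqref{qrl3}.

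The only mildly delicate point is bookkeeping of exponents in the symmetry argument — choosing the correct substitution ($z\mapsto q^{-2}z^{-1}$ rather than $z\mapsto z^{-1}$) so that each factor maps to a reflected copy of itself, and then summing $\sum_{j=0}^{n-1}j^2=\frac{(n-1)n(2n-1)}{6}$ correctly. Everything else is a routine coefficient extraction, so I do not expect a real obstacle; the main care is simply keeping the $q$-powers and the $z$-degree shift aligned.
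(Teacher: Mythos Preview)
The paper does not prove this theorem; it is quoted from \cite{GA1}. (The analogous type-$B$ identities, Theorems~\ref{qrl} and the symmetry \eqref{qrsym}, are proved later in the paper by combinatorial arguments --- decomposing signed permutations according to their last entry for the recurrence, and the bijection $\pi\mapsto\pi_0\pi$ for the symmetry --- rather than via generating functions.) Your generating-function route is a perfectly natural alternative, and your argument for \eqref{qrl1} is correct. There are, however, two points to fix.

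\medskip
\noindent\textbf{On \eqref{qrl2}.} Your identity $(1-q^{n-1}z)F_n(z)=(1-q^{n(n-1)}z^n)F_{n-1}(z)$ is correct, but extracting the coefficient of $z^k$ gives
\[
i_q(n,k)-q^{n-1}i_q(n,k-1)=i_q(n-1,k)-q^{n(n-1)}i_q(n-1,k-n),
\]
so the middle term on the right is $q^{n-1}i_q(n,k-1)$, not $q^{n-1}i_q(n-1,k-1)$ as printed in \eqref{qrl2}. A quick check at $n=3$, $k=2$ (where $i_q(3,2)=q^3+q^4$ but the right-hand side of \eqref{qrl2} as printed gives only $q^3$) confirms that the printed formula is a typo; your derivation produces the correct recurrence, consistent with the $q=1$ case and with the type-$B$ analogue \eqref{recb}. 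You should flag this rather than claim your computation ``yields exactly \eqref{qrl2}''.

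\medskip
\noindent\textbf{On \eqref{qrl3}.} The substitution $z\mapsto q^{-2}z^{-1}$ does not do what you say: in the $j$-th factor it produces $\sum_{i=0}^{j}q^{(j-2)i}z^{-i}$, not $\sum_{i=0}^{j}q^{-ji}z^{-i}$; and the expression $z^{-j}q^{-j^2}\sum_{i=0}^{j}q^{ji}z^{i}$ you write is the original factor at parameter $q$, not at $1/q$. The clean fix is to observe directly that each factor $P_j(z;q)=\sum_{i=0}^{j}(q^jz)^i$ is palindromic in the sense
\[
P_j(z;q)=q^{j^2}z^{j}\,P_j\bigl(z^{-1};q^{-1}\bigr)
\]
(reverse the sum via $i\mapsto j-i$). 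Taking the product over $j=0,\dots,n-1$ gives $F_n(z;q)=q^{\sum_{j}j^2}\,z^{\binom{n}{2}}\,F_n(z^{-1};q^{-1})$, and now $\sum_{j=0}^{n-1}j^2=\tfrac{(n-1)n(2n-1)}{6}$ together with coefficient extraction yields \eqref{qrl3}. Your overall plan for the symmetry is right; only the specific substitution was wrong.
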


\bigskip

Return to the hyperoctahedral group $B_n$. From \cite{Bren},  this group has the canonical set of generators $S=\{t_1,s_1,\ldots,s_{n-1}\}$ and $t_{i+1}=s_i t_i s_i$ for each $i=1,\ldots,n-1$. It satisfies a semidirect product $B_n=\mathcal{S}_n\rtimes \mathcal{T}_n$, where $\mathcal{S}_n$ is generated by $\{s_1,\ldots,s_{n-1}\}$ and $\mathcal{T}_n$ is a normal subgroup of $B_n$ generated by $\{t_1,\ldots,t_{n}\}$. Then, it is clearly that $| B_n |=2^n n!$.

\medskip
Any signed permutation $\pi\in \mathcal{B}_n$  can be uniquely
written in the form
\[\pi=\left(
  \begin{array}{cccc}
    1 & 2 &\cdots & n \\
    (-1)^{r_1}\sigma_1&(-1)^{r_2}\sigma_2& \cdots & (-1)^{r_n}\sigma_n \\
  \end{array}\right)=\sigma\prod_{j=1}^n t_j^{r_j}\]
where $r_k=0$ or $1$ and $\sigma \in \mathcal{S}_n$.

\medskip

From \cite{Ad}, we can write also any signed permutation $\pi \in B_n$ as follows:
\begin{equation}\label{eqs}
\pi=\gamma_{n-1}^{k_{n-1}}\cdots\gamma_{1}^{k_{1}}\gamma_{0}^{k_{0}}    
\end{equation}
where $\gamma_{0}=t_1$, $\gamma_j=s_j\cdots s_1t_1$, $0\leq k_j\leq 2j+1$ for $j=1,\ldots, n-1$ and $k_0=0$ or $1$.

\bigskip

As new extension of the major index, Adin and Roichman \cite{Ad} defined red the following statistic for the hyperoctahedral group $B_n$ (the case $m=2$).
\begin{definition}Let $\pi \in B_n$.  {\bf The flag-major index} $fmaj(\pi)$ of $\pi$ is the following sum:
\[fmaj(\pi)=\sum_{j=0}^{n-1}k_j\]
where $k_j$, for $j=0,\ldots,n-1$, are the powers given in \eqref{eqs}.
\end{definition}
From \cite{Ad}, this statistic is also Mahonian:
\[\sum_{\pi \in B_n} q^{fmaj(\pi)}=\prod_{i=1}^{n}\frac{1-q^{2i}}{1-q}.\]

After this statistic, Arslan et al. \cite{Ars2} proposed an extension of inversion statistic for the hyperoctahedral group $B_n$ as follows (the case $m=2$ of Theorem 4.5 in \cite{Ars2}).
\begin{definition}\label{dib}LEt $\pi=\sigma\prod_{j=1}^n t_j^{r_j} \in B_n$. An {\bf inversion of type $B$} of $\pi$ is the sum of $i$-inversions $inv_i(\pi)$ of the permutation $\pi$, i.e.,
\[inv_B(\pi)=\sum_{i=1}^n inv_i(\pi)\]
where $inv_i(\pi)$ satisfies the following relation \cite[Theorem 4.5]{Ars2}:
\begin{equation}\label{eivi}
inv_i(\pi):=\begin{cases}
	inv_i(\sigma)=|\{(j,n+1-i): j<n+1-i, \sigma_j>\sigma_{n+1-i} \}|, &\text{if \ } r_{n+1-i}=0,\\
	1+2|\{(j,n+1-i): j<n+1-i, \sigma_j<\sigma_{n+1-i} \}|+inv_i(\sigma),&\text{if \ }r_{n+1-i}=1.
	\end{cases}    
\end{equation}
\end{definition}

\medskip

Define $Inv_B(\pi)=\{inv_1(\pi),\ldots,inv_n(\pi)\}$ to be the {\bf inversion table} of the permutation $\pi\in \mathcal{B}_n$. As an example, for $\pi=\left(
  \begin{array}{ccccccc}
    1 & 2 &3&4&5 & 6 \\
    5 & -1 & 2 & -4 & 6 & 3 \\
  \end{array}\right)\in \mathcal{B}_6$ the inversion table of $\pi$ is $Inv_B(\pi)=\{3,0,6,1,2,0\}$, and so $inv_B(\pi)=12$.

 \bigskip
We can easily write $B_n$ as follows: $$B_n=\biguplus_{j=-n}^{n}C_j$$ where $C_j=\{\pi \in B_n:\pi(n)=j\}$.

\medskip

Let $\pi=\left(
  \begin{array}{cccc}
    1 & \cdots &(n-1)& n \\
    \pi(1) & \cdots &\pi(n-1)& j\\
  \end{array}\right)\in C_j$ and $\tau$ be is a signed permutation defined by 
\[
\tau=\left(
  \begin{array}{ccc}
    a_1 & \cdots & a_{n-1} \\
    \pi(1) & \cdots & \pi(n-1) \\
  \end{array}\right)\in P\left([n]\backslash\{|j|\}\right)
\]
where $a_1,\ldots,a_{n-1}$ are an arrangement of elements of $[n]\backslash\{|j|\}$ in increasing order, and $P\left([n]\backslash\{|j|\}\right)$ is the group of all the signed permutation of the set $[n]\backslash\{|j|\}$. So, if we set 

\[
\pi_{\tau,j}=\left(
  \begin{array}{cccc}
    1 & \cdots &n-1& n\\
    \tau(a_1) & \cdots & \tau(a_{n-1})& j \\
   \end{array} \right)
\]
then we obtain $\pi=\pi_{\tau,j}$. Hence by \eqref{eivi}, we conclude that
\begin{equation}\label{rib}
inv_B(\pi):=\sum_{i=1}^n inv_i(\pi)=\begin{cases}
	n-j+inv_B(\tau), &\text{if \ } j>0,\\
	n-j-1+inv_B(\tau),&\text{if \ }j<0.
	\end{cases} 
\end{equation}

\medskip 
Arslan \cite{Ars} proved also that the total number of inversions of all permutations $\pi\in B_n$, namely $\mathcal{B}_n=\sum_{\pi \in B_n} inv_B(\pi)$, satisfies the following identities:
\begin{align}
&\mathcal{B}_n=2^{n-1}n^2n!,\label{eqtb0}\\
&\mathcal{B}_n=2^{n-1}n!(2n-1)+2n\mathcal{}{B}_{n-1}  \text{\ \  for\ } n\geq 2,\label{eqtb}
\end{align}
with $\mathcal{B}_1=1$.

\bigskip

Arslan et al. \cite{Ars2} established that the inversion statistic of type $B$ ”$inv_B$” is equi-distributed with the
flag-major index ”$fmaj$” over the hyperoctahedral group $B_n$, that is,
\begin{equation}\label{fIm}
 \sum_{\pi \in B_n}q^{inv_B(\pi)}=\sum_{\sigma \in B_n}q^{fmaj(\pi)}=\prod_{i=1}^{n}\frac{1-q^{2i}}{1-q}.  
\end{equation}

\medskip
The permutation of $B_n$ having maximum number of inversions of type $B$ is
$$\pi_0=\left(
  \begin{array}{ccccc}
    1 & 2 &\cdots &n-1& n \\
    -1 & -2 & \cdot &-(n-1)& -n \\
  \end{array}\right),$$
which has $inv_B(\pi_0)=n^2$.

\bigskip

The previous notations and results of the hyperoctahedral group $B_n$, allow to Arslan in \cite{Ars} to introduce the number of signed permutations of length $n$ with exactly $k$ inversions, which is denoted by $i_B(n, k)$ and called {\bf Mahonian numbers of type $B$}, such that $$i_B(n, k):=|I_B(n,k)|=|\{\pi \in B_n: inv_B(\pi)=k\}|$$
and
\begin{equation}
 \sum_{k=0}^{n^2}i_B(n, k)q^k=(1+q)(1+q+q^2+q^3)\cdots(1+q+\cdots+q^{2n-1}).   
\end{equation}

\medskip

Similar to the combinatorial interpretation established by Ghemit and Ahmia \cite{GA1} for  classical Mahonian numbers $i(n,k)$, Arslan \cite{Ars} gave the following combinatorial interpretation for Mahonian numbers of type B:

\medskip

\noindent{\bf Combinatorial interpretation:} $i_B(n, k)$ counts the number of ways to place
”$k$” balls into ”$n$” boxes such that the $j$th box contains at most ”$2j-1$” balls.

\bigskip

He called \cite{Ars} the table values of $i_B(n, k)$ as the {\bf Mahonian triangle of type $B$}. This table also appears in Sloane \cite{slo} as {\bf A128084}. See Table 1.

\begin{table}[h!]
		
		\begin{center}
\setlength{\tabcolsep}{4pt}

	\begin{tabular}{|c|c|ccccccccccccc|}
 \hline
	$n/k$ & $\mathcal{B}_n$& \ \ 0 \ \ & \ \ 1 \ \ & \ \ 2 \ \ & \ \ 3 \ \ & \ \ 4 \ \ & \ \ 5 \ \ & \ \ 6 \ \ & \ \ 7 \ \ & \ \ 8 \ \ & \ \ 9 \ \ & \ \ 10 \ \ & \ \ $\cdots$ \ \ &    \\
 \hline
    0 & {\bf 0} &1 &  &  &  &  &  &  &  &  &  &  &  &      \\
	1 & {\bf 1} &1 & 1 &  &  &  &  &  &  &  &  &  &  &      \\
	2 & {\bf 16} & 1 & 2 & 2 & 2 & 1 &  &  &  &  &  &  &  &      \\
	3 & {\bf 216} & 1 & 3 & 5 & 7 & 8 & 8 & 7 & 5 & 3 & 1 &  &    &    \\
	4 & {\bf 3072} & 1 & 4 & 9 & 16 & 24 & 32 & 39 & 44 & 46 & 44 & 39 & $\cdots$  &   \\  
 \hline
	\end{tabular}

		\caption{The Mahonian triangle of type $B$.}\label{t}
	\end{center}
		\end{table}
  
\medskip

For $n\geq 1$ and $0\leq k\leq n^2$, the Mahonian number of type B satisfies the following recurrence relations \cite[Theorems 4.2 and 4.3]{Ars}:
\begin{align}
&i_B(n,k)=i_B(n,n^2-k), \label{eb1} \\   
&i_B(n,k)=\sum_{j=0}^{2n-1}i_B(n-1,k-j) \text{\ for\ }n\geq2,\label{eb2}
\end{align}
where $i_B(1,0)=i_B(1,1)=1$ and $i_B(n,k)=0$ unless $0\leq k\leq n^2$.

\bigskip

\noindent According to all properties of $i_B(n,k)$ and $\mathcal{B}_n$, Arslan \cite{Ars} concluded that $\mathcal{B}_n=\sum_{k=0}^{n^2}i_B(n,k)k$.

\section{Combinatorial formulas and interpretations}\label{Cf}

This section is devoted to some combinatorial formulas: a recurrence relation, the Knuth-Netto formula, and generating function for the subdiagonals on or below the main diagonal of Mahonian numbers of type $B$ and its combinatorial interpretations using lattice paths and partitions/tilings. The lattice path interpretation allows us  to say that there is a bijection between these lattice paths and the signed permutations of the hyperoctahedral group $B_n$, and this means that we can build these permutations through these lattice paths.

\bigskip

We can easily obtain from \eqref{eb2} that the following recurrence relation for the Mahonian numbers of type $B$ holds.
\begin{proposition}
For $n>1$ and $0\leq k\leq n^2$, we have
\begin{equation}\label{recb}
i_B(n,k)=i_B(n,k-1)+i_B(n-1,k)-i_B(n-1,k-2n). 
\end{equation}
\end{proposition}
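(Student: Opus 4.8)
The plan is to derive the three-term recurrence \eqref{recb} directly from the summation formula \eqref{eb2}, which expresses $i_B(n,k)$ as a window sum of $2n$ consecutive entries from row $n-1$. First I would write out \eqref{eb2} explicitly as
\[
i_B(n,k)=\sum_{j=0}^{2n-1}i_B(n-1,k-j)=i_B(n-1,k)+i_B(n-1,k-1)+\cdots+i_B(n-1,k-2n+1).
\]
Then I would do the same for $i_B(n,k-1)$, obtaining the window
\[
i_B(n,k-1)=\sum_{j=0}^{2n-1}i_B(n-1,k-1-j)=i_B(n-1,k-1)+i_B(n-1,k-2)+\cdots+i_B(n-1,k-2n).
\]
The two windows overlap in $2n-1$ terms, namely $i_B(n-1,k-1),\dots,i_B(n-1,k-2n+1)$. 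Subtracting, the telescoping leaves only the two endpoint terms: $i_B(n,k)-i_B(n,k-1)=i_B(n-1,k)-i_B(n-1,k-2n)$. Rearranging gives exactly \eqref{recb}.

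There is essentially no obstacle here; the only point requiring a word of care is the validity of \eqref{eb2} at the boundary. The recurrence \eqref{eb2} is stated for $n\ge 2$ with the convention $i_B(m,\ell)=0$ unless $0\le \ell\le m^2$, so when I shift $k$ to $k-1$ the formula still holds for all relevant $k$ (for $k-1<0$ both sides vanish, and for $k$ near $n^2$ the out-of-range summands are zero by convention). Thus I would briefly note that \eqref{eb2} holds for every integer $k$ once the zero-padding convention is in force, so the subtraction is legitimate with no edge cases to treat separately. I would also observe that the hypothesis $n>1$ in the proposition is inherited from the hypothesis $n\ge 2$ needed for \eqref{eb2}.

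If one prefers a generating-function phrasing, an equivalent route is to multiply the row polynomial identity $\sum_k i_B(n,k)q^k=(1+q+\cdots+q^{2n-1})\sum_k i_B(n-1,k)q^k$ through by $(1-q)$, using $(1-q)(1+q+\cdots+q^{2n-1})=1-q^{2n}$, to get $(1-q)\sum_k i_B(n,k)q^k=(1-q^{2n})\sum_k i_B(n-1,k)q^k$, and then extract the coefficient of $q^k$ on both sides. This yields $i_B(n,k)-i_B(n,k-1)=i_B(n-1,k)-i_B(n-1,k-2n)$, the same identity. I would present the telescoping-sum argument as the main proof since it is the most elementary, and perhaps remark that it mirrors the derivation of \eqref{qrl2} from \eqref{qrl1} in the $q$-Mahonian setting. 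The main ``obstacle'' is merely bookkeeping of the summation bounds, which the zero-convention dispatches cleanly.
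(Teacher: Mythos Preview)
Your proof is correct and follows exactly the approach the paper indicates: it states that \eqref{recb} ``can easily'' be obtained from \eqref{eb2}, and your telescoping subtraction of the two window sums is precisely that derivation. The generating-function variant you mention is equivalent, and your remark about the zero-padding convention handling the boundary cases is apt.
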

\subsection{The Knuth-Netto formula of Mahonian numbers of type $B$}
In this subsection, we give the analogue of the Knuth-Netto \cite{Kn,net} formula for the case of Mahonian numbers $i_B(n,k)$ of type $B$, a question proposed by Arslan in his paper \cite{Ars}. 

\medskip

First of all, we give Knuth-Netto \cite{Kn,net} formula for the $k$th Mahonian number $i(n, k)$ when $k\leq n$ as follows,
\begin{equation}\label{KN}
i(n,k)=\binom{n+k-1}{k}+\sum_{j=1}^{+\infty}(-1)^j \binom{n+k-u_j -j-1}{k-u_j-j}+\sum_{j=1}^{+\infty}(-1)^j \binom{n+k-u_j -1}{k-u_j},
\end{equation}
where $u_j=\frac{j(3j-1)}{2}$ is the $j$th pentagonal number, see Figure \ref{ff}. 

\begin{figure}[h!]
   \centering
   \includegraphics[width=5cm]{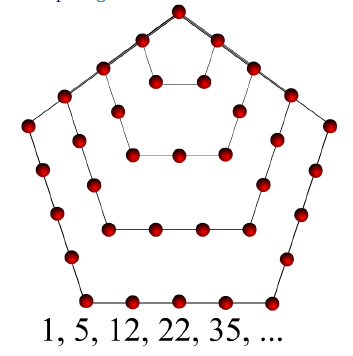}
   \caption{The pentagonal numbers.}\label{ff}
 \end{figure}

\medskip

This formula follows from the generating function and {\bf Euler}'s pentagonal number theorem.
\begin{theorem}\cite{And1,Har,Ind} We have,
\begin{equation}\label{Euler}
\prod_{j=1}^{+\infty}(1-q^j)=\sum_{k=-\infty}^{+\infty}(-1)^kq^{u_k}=1+\sum_{k=1}^{+\infty}(-1)^k\left(q^{\frac{k(3k-1)}{2}}+q^{\frac{k(3k+1)}{2}} \right).
\end{equation}
\end{theorem}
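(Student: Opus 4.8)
The plan is to give Franklin's classical involution proof, which is the combinatorial argument most in keeping with the partition-theoretic flavour of this paper. First I would expand the infinite product formally as a signed sum over partitions into distinct parts:
\[
\prod_{j=1}^{+\infty}(1-q^j)=\sum_{\lambda}(-1)^{\ell(\lambda)}q^{|\lambda|},
\]
where $\lambda$ ranges over all partitions into distinct positive parts, $\ell(\lambda)$ denotes the number of parts, and $|\lambda|$ denotes their sum. Consequently the coefficient of $q^n$ equals $p_{\mathrm{even}}(n)-p_{\mathrm{odd}}(n)$, the number of partitions of $n$ into an even number of distinct parts minus the number into an odd number of distinct parts. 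The theorem is therefore equivalent to showing that this difference vanishes unless $n$ is a generalized pentagonal number $u_k=\frac{k(3k\pm 1)}{2}$, in which case it equals $(-1)^k$.

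To prove this I would construct a sign-reversing involution on the set of partitions into distinct parts. Given such a partition written in decreasing order, let $s$ be its smallest part and let $r$ be the length of the \emph{top staircase}, i.e. the length of the maximal run of consecutive integers $\lambda_1,\lambda_1-1,\lambda_1-2,\dots$ occupying the largest parts. The involution is defined by two mutually inverse moves: when $s\le r$, delete the smallest part and add $1$ to each of the $s$ largest parts; when $s>r$, subtract $1$ from each of the $r$ largest parts and adjoin a new smallest part equal to $r$. I would verify that each move preserves distinctness of the parts and the total size $|\lambda|$ while changing $\ell(\lambda)$ by exactly one, hence flipping the sign $(-1)^{\ell(\lambda)}$, and that applying the appropriate move twice returns the original partition.

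The crux of the argument, and the main obstacle, is the analysis of the fixed points — the partitions on which neither move is legal because the smallest part and the top staircase overlap. A careful case analysis shows this occurs exactly for the two staircase families $(k,k+1,\dots,2k-1)$, where $s=r=k$ and the deletion move would collide with the run, and $(k+1,k+2,\dots,2k)$, where $s=r+1$ and the new smallest part $r=k$ would duplicate an existing part; these have sizes $\frac{k(3k-1)}{2}$ and $\frac{k(3k+1)}{2}$ respectively, each with exactly $k$ parts and hence sign $(-1)^k$. Since the involution cancels every other term in sign-reversing pairs, the only surviving contributions to $\sum_{\lambda}(-1)^{\ell(\lambda)}q^{|\lambda|}$ are the empty partition, giving the leading $1$, together with $\sum_{k\ge 1}(-1)^k\big(q^{k(3k-1)/2}+q^{k(3k+1)/2}\big)$, which is precisely the right-hand side of \eqref{Euler}. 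The delicate point throughout is the boundary between the two moves at $s=r$ and $s=r+1$, since it is exactly there that the two families of fixed points arise and where an incautious definition of the involution would either double-count a partition or fail to be self-inverse.
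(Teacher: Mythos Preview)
Your proposal is correct: Franklin's sign-reversing involution is the standard combinatorial proof of Euler's pentagonal number theorem, and your description of the two moves, the verification that they are mutually inverse and weight-preserving, and the identification of the fixed points as the staircase partitions of sizes $\frac{k(3k\pm1)}{2}$ is accurate.

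However, the paper does not actually prove this statement at all. The theorem is stated with citations to \cite{And1,Har,Ind} and used as a known result; the authors invoke it only as input for deriving their Corollary~\ref{cre} (via the substitution $q\mapsto q^2$) and subsequently the type-$B$ Knuth--Netto formula in Theorem~\ref{knuth-netto}. So there is nothing in the paper to compare your argument against: you have supplied a full proof where the paper simply quotes the literature. Your choice of Franklin's involution is entirely appropriate given the partition-theoretic interpretations developed elsewhere in the paper, but it goes beyond what the authors themselves provide.
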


By letting $q\mapsto q^2$ in the previous theorem, we find an analogue of the generating function and Euler's pentagonal number theorem as follows.
\begin{corollary}\label{cre} We have,
\begin{equation}\label{qeuler}
\prod_{j=1}^{+\infty}(1-q^{2j})=\sum_{k=-\infty}^{+\infty}(-1)^kq^{2u_k}=1+\sum_{k=1}^{+\infty}(-1)^k\left(q^{k(3k-1)}+q^{k(3k+1)} \right).
\end{equation}
\end{corollary}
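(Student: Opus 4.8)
The plan is to derive Corollary \ref{cre} as an immediate specialization of Euler's pentagonal number theorem \eqref{Euler}. First I would note that both sides of \eqref{Euler} are well-defined as formal power series in $q$ (equivalently, both converge absolutely for $|q|<1$), and that $q^{2}$ has vanishing constant term; hence the substitution $q\mapsto q^{2}$ is legitimate and carries the identity \eqref{Euler} to a valid identity.

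Next I would simply track the effect of $q\mapsto q^{2}$ term by term. On the left-hand side each factor $1-q^{j}$ becomes $1-q^{2j}$, so $\prod_{j\geq 1}(1-q^{j})$ becomes $\prod_{j\geq 1}(1-q^{2j})$. In the middle expression the exponent $u_{k}$ is doubled, giving $q^{2u_{k}}$. On the right-hand side one has $q^{\frac{k(3k-1)}{2}}\mapsto q^{2\cdot\frac{k(3k-1)}{2}}=q^{k(3k-1)}$ and likewise $q^{\frac{k(3k+1)}{2}}\mapsto q^{k(3k+1)}$, while the signs $(-1)^{k}$ and the leading term $1$ are untouched. Assembling these pieces yields exactly \eqref{qeuler}.

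There is essentially no obstacle here: the only points needing (minimal) justification are the admissibility of the substitution $q\mapsto q^{2}$, which is clear for formal power series, and the trivial arithmetic $2\cdot\tfrac{k(3k\pm1)}{2}=k(3k\pm1)$ together with $(-1)^{-k}=(-1)^{k}$, which accounts for the symmetric pairing of the $k$ and $-k$ terms in the doubly infinite sum.
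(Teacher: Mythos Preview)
Your proof is correct and follows exactly the paper's approach: the paper simply states that the corollary is obtained by letting $q\mapsto q^{2}$ in Euler's pentagonal number theorem, and you have merely spelled out that substitution term by term.
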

Using this corollary, we obtain the analogue of the Knuth-Netto  formula associated to Mahonian numbers $i_B(n,k)$ of type $B$.
\begin{theorem}\label{knuth-netto}For $n\geq1$ and $0 \leq k\leq n$, we have 
\begin{align}
i_B(n,k)=\binom{n+k-1}{k}+\sum_{j=1}^{+\infty}(-1)^j  \binom{n+k-2u_j -2j-1}{k-2u_j-2j}+\sum_{j=1}^{+\infty}(-1)^j \binom{n+k-2u_j -1}{k-2u_j}.&
\end{align} 
\end{theorem}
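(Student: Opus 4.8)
The plan is to derive the formula from the generating function
$\sum_{k\ge 0} i_B(n,k)q^k = \prod_{j=1}^{n}(1+q+\cdots+q^{2j-1})$
by the same device that yields the classical Knuth--Netto formula \eqref{KN}. First I would rewrite each factor as
$1+q+\cdots+q^{2j-1} = \dfrac{1-q^{2j}}{1-q}$, so that
\[
\sum_{k\ge 0} i_B(n,k)q^k \;=\; \frac{1}{(1-q)^n}\prod_{j=1}^{n}(1-q^{2j}).
\]
The factor $1/(1-q)^n$ expands via the negative binomial theorem as $\sum_{k\ge 0}\binom{n+k-1}{k}q^k$. For the remaining product $\prod_{j=1}^{n}(1-q^{2j})$, the key observation — exactly as in the classical case — is that in the range $0\le k\le n$ the truncated product agrees with the full infinite product $\prod_{j=1}^{\infty}(1-q^{2j})$ modulo $q^{n+1}$, because the first term dropped by truncating at $j=n$ has degree $2(n+1)>n$. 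Hence for $k\le n$ the coefficient of $q^k$ in $\prod_{j=1}^{n}(1-q^{2j})$ equals the coefficient of $q^k$ in $\prod_{j=1}^{\infty}(1-q^{2j})$, which Corollary \ref{cre} (the $q\mapsto q^2$ pentagonal number theorem, \eqref{qeuler}) evaluates as $\sum_{k}(-1)^k q^{2u_k}$.

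Next I would multiply the two expansions and extract the coefficient of $q^k$. Writing the pentagonal expansion as $1+\sum_{j\ge 1}(-1)^j\bigl(q^{j(3j-1)}+q^{j(3j+1)}\bigr)$, note $j(3j-1)=2u_j$ and $j(3j+1)=2u_{-j}=2u_j+2j$. Convolving with $\sum_{m\ge 0}\binom{n+m-1}{m}q^m$ gives
\[
i_B(n,k)=\binom{n+k-1}{k}
+\sum_{j=1}^{\infty}(-1)^j\binom{n+k-2u_j-2j-1}{k-2u_j-2j}
+\sum_{j=1}^{\infty}(-1)^j\binom{n+k-2u_j-1}{k-2u_j},
\]
valid for $0\le k\le n$, with the convention that a binomial coefficient with negative lower index vanishes (so each sum is actually finite). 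This is precisely the claimed identity.

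The only genuine point requiring care — and the step I would flag as the main (though modest) obstacle — is justifying that the truncation at $j=n$ is harmless for $k\le n$; one must check the degree bound $2(n+1)>n$ holds for all $n\ge 1$, which is immediate, and confirm that no cancellation issue arises from the $1/(1-q)^n$ prefactor (it has only nonnegative-degree terms, so it cannot lower degrees). Everything else is a routine coefficient extraction. I would also remark that the hypothesis $k\le n$ is exactly what makes the pentagonal series usable without truncation, paralleling the restriction in \eqref{KN}; for $k>n$ one would instead need the finite product $\prod_{j=1}^n(1-q^{2j})$, whose coefficients are no longer given by a single clean pentagonal pattern.
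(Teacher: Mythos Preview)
Your proof is correct and follows essentially the same route as the paper: write the generating function as $(1-q)^{-n}\prod_{j=1}^{n}(1-q^{2j})$, observe that for $k\le n$ the finite product may be replaced by the infinite one (your degree check $2(n+1)>n$ makes explicit what the paper leaves implicit), apply the $q\mapsto q^2$ pentagonal identity of Corollary~\ref{cre}, and convolve with the negative-binomial expansion of $(1-q)^{-n}$. Your identification $j(3j+1)=2u_j+2j$ cleanly explains the two binomial sums, and the remark that $(1-q)^{-n}$ cannot lower degrees is a nice extra justification that the paper omits.
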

\begin{proof}From the generating function of $i_B(n,k)$, we have
\begin{align*}
\sum_{k=0}^{n}i_B(n,k)q^k&=\prod_{j=1}^{n}\frac{1-q^{2j}}{1-q}=\frac{1}{(1-q^n)}\prod_{j=1}^{n}(1-q^{2j})\\
&=\prod_{j=1}^{n}(1-q^{2j})\sum_{l=0}^{+\infty}\binom{n+l-1}{l}q^l.
\end{align*}
The coefficients of $\prod_{j=1}^{n}(1-q^{2j})$ in the last equation will match those in the power series expansion of the
infinite product of Corollary \ref{cre} given by the analogue of Euler's pentagonal number theorem up to the coefficient on $q^n$. 

\medskip

By Corollary \ref{cre}, we consider the product
\begin{align*}
\prod_{j=1}^{+\infty}(1-q^{2j})\sum_{l=0}^{+\infty}\binom{n+l-1}{l}q^l=\left(1+\sum_{j=1}^{+\infty}(-1)^j\left(q^{j(3j-1)}+q^{j(3j+1)} \right)\right)\sum_{l=0}^{+\infty}\binom{n+l-1}{l}q^l.&\\
\end{align*}
From this equation, the coefficient of $q^k$, for $k \leq n$, is 
\begin{eqnarray*}
\binom{n+k-1}{k}+\sum_{j=1}^{+\infty}(-1)^j \binom{n+k-2u_j -2j-1}{k-2u_j-2j}+\sum_{j=1}^{+\infty}(-1)^j \binom{n+k-2u_j -1}{k-2u_j},&&
\end{eqnarray*}
which is exactly the right side of the desired identity of $i_B(n,k)$.
\end{proof}

\begin{example}
We can illustrate the formula in Theorem \ref{knuth-netto} as follows:
$$i_B(4,7)=\binom{9}{5}-\binom{8-2u_1}{5-2u_1}-\binom{10-2u_{1}}{7-2u_{1}}=44$$
where $u_1=1$.
\end{example}

\subsection{The generating functions for the subdiagonals of Mahonian triangle of type $B$}
In this subsection, we shall derive generating functions for the subdiagonals on or below the main diagonal
of {\bf the Mahonian triangle of type $B$}. For $j\geq 0$, let $T_j(x)=\sum_{n\geq 0}i_B(n,n-j)x^n$ is the generating function for signed permutations of $\langle n\rangle =\{-n,\ldots,-1,1,\ldots,n\}$, of the hyperoctahedral group $\mathcal{B}_n$, with exactly $n-j$ inversions. 
\begin{example}
The first four functions of $T_j(x)$ are
\begin{align*}
&T_0(x)=1+x+2x^2+7x^3+24x^4+86x^5+\cdots \\
&T_1(x)=x+2x^2+5x^3+16x^4+54x^5+190x^6+\cdots \\
&T_2(x)=x^2+3x^3+9x^4+30x^5+104x^6+371x^7+\cdots\\
&T_3(x)=x^3+4x^4+14x^5+50x^6+181x^7+664x^8+\cdots
\end{align*}
\end{example}

By using the concepts of inversion table in the symmetric group $\mathcal{S}_n$ and subdiagonal sequence, Calaesson et al. \cite{CFS} showed that the generating function for permutations of $[n]$ with exactly $n-j$ inversions, denoted by $S_j(x)$, satisfies the following relation:
\begin{equation}\label{gfp}
S_j(x)=\left(xC(x)\right)^j S_0(x)
\end{equation} 
where $S_0(x)=1+x^3+5x^4+22x^5+\cdots$ is the generating function for permutations of $[n]$ with exactly $n$ inversions, and $C(x)=\frac{1-\sqrt{1-4x}}{2x}$ is the generating function of the Catalan numbers $C_n=\frac{1}{n+1}\binom{2n}{n}$, which are shown \cite[Lemma 1]{CFS} equal to number of weakly increasing subdiagonal sequences of length $n$. 

\medskip

By the same concepts used by Calaesson et al. in \cite{CFS} but in the hyperoctahedral group $\mathcal{B}_n$, we establish in the following theorem a similar relation to \eqref{gfp} for the generating function of signed permutations of $\langle n\rangle$ with exactly $n-j$ inversions.

\medskip

We start by the following notation: Let $\mathcal{C}_B(n)$ be the subset of $I_B(n,n-1)$ (i.e., the set  of permutations of $B_n$ having $n-1$ inversions of type $B$) consisting of those permutations whose every prefix of length $k\geq 1$ has fewer than $k$ inversions of type $B$. 

\medskip

Calaesson et {\it al.} \cite[Lemma 1]{CFS} proved that $|\mathcal{C}_n|=C_{n-1}$, where  $\mathcal{C}_n$ is the subset of the set $I(n,n-1)$ (i.e., the set of classical permutations of $n$ having $n-1$
inversions) consisting of those permutations whose every prefix of length $k\geq1$
has fewer than $k$ inversions.

\medskip

From the definition of $\mathcal{C}_B(n)$ and \cite[Lemma 1]{CFS}, we can give the following lemma.
\begin{lemma}\label{ctln}
For $n\geq 1$, $| \mathcal{C}_B(n)|=|\mathcal{C}_n|=C_{n-1}$.
\end{lemma}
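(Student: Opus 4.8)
The claim is that $|\mathcal{C}_B(n)| = |\mathcal{C}_n| = C_{n-1}$, and since the second equality is exactly \cite[Lemma 1]{CFS}, the whole task is to produce a bijection $\mathcal{C}_B(n) \to \mathcal{C}_n$ (or, equally well, to show directly that $|\mathcal{C}_B(n)|$ counts weakly increasing subdiagonal sequences of length $n-1$, which \cite[Lemma 1]{CFS} identifies with Catalan number $C_{n-1}$). The plan is to pass through inversion tables. Recall that for $\sigma \in \mathcal{S}_n$ the Lehmer/inversion code $(c_1,\dots,c_n)$ with $0 \le c_i \le i-1$ encodes $\sigma$ faithfully, $\mathrm{inv}(\sigma) = \sum c_i$, and ``every prefix of length $k$ has fewer than $k$ inversions'' translates into a linear constraint on partial sums of the code. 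For $B_n$, the excerpt has already handed us the analogous device: by \eqref{eqs} every $\pi \in B_n$ is $\gamma_{n-1}^{k_{n-1}}\cdots \gamma_1^{k_1}\gamma_0^{k_0}$ with $0 \le k_j \le 2j+1$ and $k_0 \in \{0,1\}$, and by \eqref{rib} together with the inductive structure $B_n = \biguplus C_j$ the statistic $\mathrm{inv}_B$ decomposes compatibly with this coordinate system. So the first step is to make precise the ``type-$B$ inversion table'' $(b_0,b_1,\dots,b_{n-1})$ with $0 \le b_j \le 2j+1$ attached to a signed permutation, verify $\mathrm{inv}_B(\pi) = \sum_j b_j$, and check that the prefix condition on $\pi$ becomes a clean condition on the partial sums of the $b_j$.

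\textbf{Main steps.} First I would fix the bijection $\pi \leftrightarrow (b_0,\dots,b_{n-1})$ and record that the number of inversions of the length-$k$ prefix of $\pi$ equals $b_0 + b_1 + \cdots + b_{k-1}$ (this should follow from \eqref{rib} by peeling off the last letter and induction on $n$ — the inversions contributed by the $k$-th letter depend only on its value and sign relative to the earlier letters, which is precisely what $b_{k-1}$ records). Given that, $\pi \in \mathcal{C}_B(n)$ iff $b_0 + \cdots + b_{k-1} < k$ for every $1 \le k \le n$; summing all of them forces $\sum_{j=0}^{n-1} b_j = n-1$ (the total is $\mathrm{inv}_B(\pi) = n-1$ by definition of $\mathcal{C}_B(n)$, and is $< n$), so in fact each partial sum inequality is ``$\le k-1$'' with equality impossible to overshoot. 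Second, I would observe that on the classical side the exact same description holds: $\sigma \in \mathcal{C}_n$ iff its Lehmer code $(c_1,\dots,c_n)$, $0 \le c_i \le i-1$, satisfies $c_1 + \cdots + c_k < k$ for all $k$ and $\sum c_i = n-1$. Third — and this is the heart — I would exhibit a bijection between the set of $B$-codes $(b_0,\dots,b_{n-1})$ with $0 \le b_j \le 2j+1$, $\sum b_j = n-1$, all partial sums $< k$, and the set of $S$-codes $(c_1,\dots,c_n)$ with $0 \le c_i \le i-1$, $\sum c_i = n-1$, all partial sums $< k$. The key point making this work is that when $\sum_{j<k} b_j \le k-1 \le 2j+1$ automatically for all but the smallest indices, so the upper bounds $b_j \le 2j+1$ and $c_i \le i-1$ are \emph{not binding} — the partial-sum constraint is already the tight one — so both sets are literally equinumerous with $\{(a_1,\dots,a_m) : a_i \ge 0,\ a_1 + \cdots + a_k < k+1\ \forall k,\ \sum a_i = m\}$ for the appropriate small $m$ ($= n-1$), i.e. with weakly increasing subdiagonal sequences, i.e. with $C_{n-1}$ by \cite[Lemma 1]{CFS}. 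One then just needs to align the index ranges ($j = 0,\dots,n-1$ versus $i = 1,\dots,n$, an off-by-one shift, with the slots $b_0 \in \{0,1\}$ and $c_1 = 0$ carrying no information).

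\textbf{Expected obstacle.} The routine part is the bookkeeping; the one genuine thing to get right is the claim that the prefix condition in $\mathcal{C}_B(n)$ is captured by partial sums of a single $B$-inversion-table, because the definition of $\mathrm{inv}_B$ in \eqref{eivi} is stated ``from the right'' (it indexes by $n+1-i$) while prefixes grow ``from the left'', so one must be careful that truncating $\pi$ to its first $k$ letters and re-sorting the used values (as in the $\tau$ construction around \eqref{rib}) does not disturb the inversion counts already accumulated — it does not, since deleting the \emph{last} letters only removes inversions involving those letters, but one should still spell out that the surviving $inv_i$ values are unchanged under the re-indexing $P([n]\setminus\{|j|\}) \cong B_{n-1}$. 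If that compatibility is not transparent from \eqref{rib} alone, the fallback is to prove $|\mathcal{C}_B(n)| = C_{n-1}$ directly by the same recursive argument \cite{CFS} use for $\mathcal{C}_n$: condition on the last letter $\pi(n) = j$, use \eqref{rib} to see it contributes $n - j$ or $n-j-1$ inversions, note the prefix condition at $k = n$ forces that contribution to be small, and set up the Catalan recurrence $C_{n-1} = \sum C_a C_b$ on the resulting subproblems. Either route closes the lemma.
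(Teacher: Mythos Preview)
Your proposal is correct, and the core mechanism you identify --- that the partial-sum constraint $\sum_{j<k} b_j < k$ makes the upper bounds $b_j \le 2j+1$ (type $B$) and $c_i \le i-1$ (type $A$) simultaneously non-binding, so both code-sets reduce to the \emph{same} set of nonnegative integer sequences with bounded partial sums --- is sound. The bookkeeping worry you flag about left-prefixes versus the right-indexed definition in \eqref{eivi} is real but resolves exactly as you say, since deleting the rightmost letter leaves earlier $inv_i$ values intact under the re-indexing $P([n]\setminus\{|j|\})\cong B_{n-1}$.

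The paper, however, takes a much shorter route: it argues that $\mathcal{C}_B(n)$ and $\mathcal{C}_n$ are literally equal as sets (viewing $\mathcal{S}_n\subset B_n$), not merely equinumerous. The observation is that a negative entry at position $k$ contributes at least $k$ type-$B$ inversions to the $k$-prefix --- from \eqref{eivi}, if $\pi_k<0$ then the contribution is $1+2a+b$ with $a+b=k-1$, hence $\ge k$ --- so the prefix condition fails immediately. Thus every $\pi\in\mathcal{C}_B(n)$ has all signs positive, $inv_B$ reduces to $inv$ on such $\pi$, and $\mathcal{C}_B(n)=\mathcal{C}_n$ with no further work.

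Your argument is essentially a code-level reformulation of the same fact: ``the upper bound $b_j\le 2j{+}1$ is non-binding'' is precisely the statement that entries beyond the type-$A$ range (i.e., those coming from negative signs) cannot occur. What the paper's framing buys is brevity and the stronger conclusion of set equality; what yours buys is a template that would survive in settings where the two sets are only in bijection rather than identical. For this lemma specifically, you could shorten your write-up considerably by making the ``no negative signs'' observation directly and skipping the general code comparison.
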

\begin{proof}
To prove this lemma, it suffices to show that the set $\mathcal{C}_B(n)$ is exactly the set $\mathcal{C}_n$. 

\medskip

The subset of $I_B(n,n-1)$ consisting of those permutations whose every $k$-prefix, $k\geq 1$,
has fewer than $k$ inversions of type $B$ is the  same $\mathcal{C}_n$, because the only permutations of $B_n$ that have $k$-prefixes have fewer than $k$ inversions of type $B$ are, from Definition \ref{dib} and relation \eqref{eivi}, those that have elements with a positive sign. This gives us the same permutations of the set $\mathcal{C}_n$. Consequently, the sets $\mathcal{C}_B(n)$ and $\mathcal{C}_n$ are the same, which gives us the result of the lemma, see Example \ref{exd}.  
\end{proof}
\begin{example} \label{exd}
For $n=3$, the set of signed permutations of length $3$ having $2$ inversions is $$I_B(3,2)=\{231,312,2-13,-132,-213\}.$$ 
Then, the set of permutation of length $3$ whose each prefix of length $k\geq 1$ has fewer than $k$ inversions of type $B$ is $$\mathcal{C}_B(3)=\{231,312\}.$$
Thus, $|\mathcal{C}_B(3)|=C_2=2$. 
\end{example}

Recall that $T_j(x)$ is the generating function for signed permutations of length $n$ with $n-j$
inversions of type $B$:
\[
T_j(x)=\sum_{n\geq 0} | I_B(n,n-j) |x^n.
\]
We omit the proof of Theorem \ref{thmc} for brevity since it can be easily seen using Lemma \ref{ctln} and the same arguments in \cite[Theorem 3]{CFS}.
\begin{theorem}\label{thmc}For $j\geq 1$, we have
\[
I_B(n,n-j-1)\simeq \bigsqcup_{i=0}^{n}I_B(i,i-j)\times \mathcal{C}_B(n-i)
\]
and thus the generating function $T_{j+1}(x)$ and $T_{j}(x)$ satisfy the identity
$$T_{j+1}(x)=xC(x)T_{j}(x),$$
equivalently
\begin{equation}\label{gfb}
T_j(x)=\left(xC(x)\right)^j T_0(x).
\end{equation}
\end{theorem}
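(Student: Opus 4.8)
The plan is to follow the template of Calaesson et al.~\cite[Theorem 3]{CFS}, transported to the hyperoctahedral setting via Lemma~\ref{ctln}, which is the crucial bridge: it identifies $\mathcal{C}_B(n)$ with the classical set $\mathcal{C}_n$, so that $|\mathcal{C}_B(n)|=C_{n-1}$ and the Catalan generating function $C(x)$ enters exactly as in the symmetric-group case. The two assertions to establish are the set-theoretic bijection
\[
I_B(n,n-j-1)\;\simeq\;\bigsqcup_{i=0}^{n} I_B(i,i-j)\times\mathcal{C}_B(n-i)
\]
and, as a formal consequence, the generating-function recursion $T_{j+1}(x)=xC(x)\,T_j(x)$; iterating the latter from $T_0(x)$ yields \eqref{gfb}.

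First I would set up the decomposition of a signed permutation $\pi\in I_B(n,n-j-1)$ by scanning its prefixes and locating the \emph{last} prefix of length $i$ whose number of inversions of type $B$ is exactly $i-j$ (equivalently, the point after which every further prefix has a ``deficiency'' that has already been used up); by relation \eqref{rib}, appending a new largest-in-absolute-value entry with a given sign changes $inv_B$ in a controlled additive way, so the suffix of length $n-i$, read on its own support and normalized to $[n-i]$ as in the passage defining $\pi_{\tau,j}$, contributes precisely $n-i-1$ inversions and has every proper prefix deficient — that is, it lies in $\mathcal{C}_B(n-i)$. Conversely, gluing any $\sigma\in I_B(i,i-j)$ to any $\tau\in\mathcal{C}_B(n-i)$ (suitably relabeled) reconstructs an element of $I_B(n,n-j-1)$, and these two maps are mutually inverse; the index $i$ ranges over $0,\dots,n$ with the usual convention that $I_B(0,-j)$ is empty for $j\ge 1$, so the coefficient extraction is clean. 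The additivity in \eqref{rib} — inversions of type $B$ split as a sum of per-position contributions $inv_i$ — is exactly what makes the concatenation map inversion-preserving, and Lemma~\ref{ctln} is what makes the ``factor'' $\mathcal{C}_B(n-i)$ Catalan-enumerated rather than something more complicated.

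Passing to generating functions, the disjoint-union bijection gives $|I_B(n,n-j-1)|=\sum_{i=0}^{n}|I_B(i,i-j)|\,|\mathcal{C}_B(n-i)|$, which is the coefficientwise form of $T_{j+1}(x)=\bigl(\sum_{m\ge 1}C_{m-1}x^{m}\bigr)T_j(x)=xC(x)\,T_j(x)$, using $\sum_{m\ge 0}C_m x^m=C(x)$ and the shift coming from $|\mathcal{C}_B(m)|=C_{m-1}$ for $m\ge 1$ (with the empty contribution at $m=0$). Iterating $j$ times down to $T_0(x)$ produces $T_j(x)=(xC(x))^j T_0(x)$, which is \eqref{gfb}.

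The main obstacle will be verifying that the prefix-scanning decomposition is genuinely a bijection in the signed setting: one must check that ``last prefix achieving deficiency $j$'' is well defined for every $\pi\in I_B(n,n-j-1)$ (existence, using that the total deficiency is $j+1$ and prefixes build it up one unit at a time under \eqref{rib}), that the normalized suffix really has \emph{strictly} deficient proper prefixes — this is where the sign-pattern argument in the proof of Lemma~\ref{ctln} is reused, since a negative entry in a prefix forces at least as many inversions as its length and would violate strict deficiency — and that relabeling the suffix's support to $[n-i]$ preserves $inv_B$, which follows because $inv_B$ depends only on the relative order and signs of the entries, not on their actual values. Once these points are in place the rest is the formal manipulation above; this is why the authors can legitimately say the proof ``can be easily seen'' by mimicking \cite[Theorem 3]{CFS}.
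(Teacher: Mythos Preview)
Your proposal is correct and takes essentially the same approach as the paper: the paper explicitly omits the proof, stating only that it ``can be easily seen using Lemma~\ref{ctln} and the same arguments in \cite[Theorem 3]{CFS}'', and your sketch does precisely this --- it transports the prefix-decomposition argument of Claesson et al.\ to the signed setting via Lemma~\ref{ctln}, then reads off the convolution identity $T_{j+1}(x)=xC(x)T_j(x)$ from $|\mathcal{C}_B(m)|=C_{m-1}$. If anything, you have supplied more detail than the paper, in particular the observations that $inv_B$ depends only on relative order and signs (so relabeling the suffix support preserves it) and that a negative entry in a prefix already forces too many inversions for the suffix to lie in $\mathcal{C}_B$, which is exactly the content of Lemma~\ref{ctln}.
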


Calaesson et al. \cite[Theorem 4]{CFS} used the partition theory to prove that $S_0(x)$ satisfies the following relation:
\begin{equation}\label{gfs}
S_0(x)=R\left(xC(x)\right),
\end{equation} 
where $R(x)=\frac{1-x}{1-2x}\prod_{n=1}^{+\infty}(1-x^n)$.

\bigskip

By the same approach given in \cite[Theorem 4]{CFS}, we establish the following theorem which generates the coefficients of the generating function $T_0(x)$. We omit the details of the proof for brevity.
\begin{theorem}We have,
\begin{equation}\label{gfbb}
T_0(x)=L\left(xC(x)\right),
\end{equation} 
where $L(x)=\frac{1-x}{1-2x}\prod_{n=1}^{+\infty}(1-x^{2n})$.
\end{theorem}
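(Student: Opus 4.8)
The plan is to mimic the partition-theoretic argument of Calaesson et al.\ \cite[Theorem 4]{CFS}, but carried out in the hyperoctahedral setting with the extra factor $\prod_{n\ge 1}(1-q^{2n})$ replacing $\prod_{n\ge 1}(1-q^n)$. First I would obtain a closed form for $T_0(x)$, the generating function $\sum_{n\ge 0} i_B(n,n)x^n$ counting signed permutations of length $n$ with exactly $n$ inversions of type $B$. The natural device is the inversion table $Inv_B(\pi)=(inv_1(\pi),\ldots,inv_n(\pi))$: by \eqref{rib} (and the decomposition $B_n=\biguplus C_j$ it is built from), the entry $inv_i(\pi)$ ranges over a set of admissible values governed by $0\le k_j\le 2j+1$ in \eqref{eqs}, so that an ``$n$-inversion'' signed permutation corresponds to a way of writing $n$ as an ordered sum $c_1+\cdots+c_n$ with $0\le c_i\le 2i-1$. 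I would encode such compositions by a generating function in two variables, tracking both the total $n$ and the length, and extract the diagonal contribution; the bounded-part condition is exactly what produces the factors $(1-q^{2j})$ in the finite product $\prod_{j=1}^{n}\frac{1-q^{2j}}{1-q}$, and letting $n\to\infty$ (with the subdiagonal normalization) converts the finite product into $\prod_{n\ge 1}(1-q^{2n})$ together with a geometric-type factor $\frac{1-x}{1-2x}$.

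Concretely, the key steps in order are: (1) Recall from Theorem \ref{thmc} that $T_j(x)=(xC(x))^j T_0(x)$, so it suffices to identify $T_0$. (2) Set $y=xC(x)$; since $C(x)$ satisfies $C=1+xC^2$, one has the standard identity $y=x C(x)$ with $xC(x)^2 = C(x)-1$, hence $x = y(1-y)$ — this is the change of variables that turns all the Catalan bookkeeping into ordinary power series in $y$. (3) Reinterpret $i_B(n,n)$ via the bounded-composition/partition model: a signed permutation in $I_B(n,n)$ is recorded by its inversion table, and subtracting off the ``baseline'' sub-staircase shows $i_B(n,n)$ equals the number of partitions of some quantity into parts obeying the type-$B$ bound, which by the same sieving as in \cite[Theorem 4]{CFS} but with even exponents gives the generating identity with $\prod_{n\ge1}(1-x^{2n})$. (4) Assemble: prove $\sum_{n\ge0} i_B(n,n) x^n = L(xC(x))$ where $L(x)=\frac{1-x}{1-2x}\prod_{n\ge1}(1-x^{2n})$, by checking both sides agree as formal power series — either by matching coefficients against the finite products $\prod_{j=1}^n\frac{1-q^{2j}}{1-q}$ after the $q\leftrightarrow x$ reorganization, or by a direct bijective/sieve argument transplanted from \cite{CFS}.

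The main obstacle I expect is step (3): making precise the passage from ``signed permutations with $n$ inversions of type $B$'' to a clean partition statistic. In the symmetric-group case the argument rests on \cite[Lemma 1]{CFS} and the weakly-increasing-subdiagonal-sequence picture; here one must verify that the type-$B$ inversion table, with its asymmetric formula \eqref{eivi} (the ``$1+2|\{\cdots\}|$'' term when $r_{n+1-i}=1$), still yields a bounded-composition model whose parts contribute even exponents, so that Corollary \ref{cre} (the $q\mapsto q^2$ pentagonal theorem) is the right sieve. Lemma \ref{ctln} already did the analogous verification for $\mathcal{C}_B(n)$ — namely that the prefix condition forces all signs positive — and I would lean on exactly that phenomenon: the ``extremal'' permutations governing $T_0$ are again essentially sign-constrained, so the relevant count collapses to the classical partition count but re-graded by a factor of two, which is precisely what swaps $R(x)$ for $L(x)$. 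Once that correspondence is nailed down, the remaining algebra (inverting $x=y(1-y)$, simplifying $\frac{1-y}{1-2y}$) is routine and parallels \cite[Theorem 4]{CFS} verbatim, which is why the details can be omitted in the write-up.
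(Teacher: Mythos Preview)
Your proposal is correct and takes essentially the same approach as the paper: the paper itself states only that the result follows ``by the same approach given in \cite[Theorem~4]{CFS}'' with details omitted, and your plan is precisely to transplant that partition-theoretic argument to the type-$B$ setting, replacing $\prod_{n\ge1}(1-x^n)$ by $\prod_{n\ge1}(1-x^{2n})$ via the bounded-composition model $0\le c_j\le 2j-1$ coming from the type-$B$ inversion table. Your identification of step~(3) as the crux, and your use of the change of variables $y=xC(x)$, $x=y(1-y)$, match the intended argument.
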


\subsection{Lattice path interpretation}
Due to Ghemit and Ahmia \cite{GA1}, the Mahonian number $ i(n,k) $ counts the number of lattice paths from $ u=(0,0) $ to $ v=(n-1,k) $ taking at most $j$ North steps at the level $j$. This interpretation allows us to give in this subsection a combinatorial interpretation by lattice paths for the Mahonian numbers of type B.
Let $\mathcal{P}^{B}_{n,k} $ denote the set of lattice paths from the point $u=(0,0) $ to the point $v=(n,k) $  for $n\geq 1$ and $0\leq k \leq n^2$, with only North steps (vertical steps $=(0,1)$) and East steps (horizontal steps $=(1,0)$), such that the number of North steps in each level $j\geq 1$ is at most $2j-1$, where the levels associated to vertical lines are from $1$ to $n$, as shown in the example of Figure \ref{fgf}.

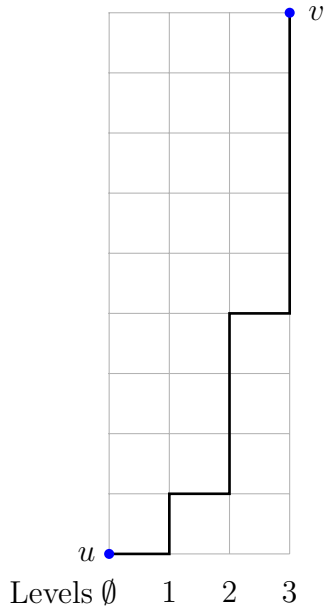
\begin{figure}[h!]
\begin{center}
\begin{tikzpicture}
\draw[step=0.8cm,color=black!30] (-0.81,0) grid (1.6,7.2);
\draw [line width=1pt](-0.8,0) -- (0,0) -- (0,0.8) -- (0.8,0.8) -- (0.8,3.2)--(1.6,3.2)--(1.6,4)--(1.6,4.8)--(1.6,5.6)--(1.6,6.4)--(1.6,7.2);
\fill[blue] (-0.8,0) circle(1.9pt) ;
\fill[blue] (1.6,7.2) circle(1.9pt) ;
\node[right=0.1pt] at (1.7,7.2){$v$};
\node at (-1.1,0){$u$};
\node at (-1.5,-0.5){\text{Levels\ }};
\node at (-0.8,-0.5){$\emptyset$};
\node at (0,-0.5){$1$};
\node at (0.8,-0.5){$2$};
\node at (1.6,-0.5){$3$};
\end{tikzpicture}
\end{center}
\caption{A path $P$ in $\mathcal{P}^{B}_{3,9}$.}
  \label{fgf}
\end{figure}

\medskip

Using Definition \ref{dib} and the previous notations, we can interpret the Mahonian numbers of type $B$ as follows.
\begin{theorem}\label{thib}
The Mahonian numbers of type B  counts the number of lattice paths from $ u=(0,0) $ to $ v=(n,k) $ taking at most $ 2j-1 $ North steps at the level $ j $ for $j\geq1$, that is,
$$i_{B}(n,k)= \mid \mathcal{P}^{B}_{n,k} \mid.$$
 \end{theorem}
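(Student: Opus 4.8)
The plan is to establish a bijection between the set $I_B(n,k)$ of signed permutations of $B_n$ with $inv_B = k$ and the set $\mathcal{P}^B_{n,k}$ of lattice paths described in the statement, thereby proving $i_B(n,k) = |\mathcal{P}^B_{n,k}|$. The natural device is the inversion table. Recall from Definition~\ref{dib} that each $\pi \in B_n$ has an inversion table $Inv_B(\pi) = \{inv_1(\pi), \ldots, inv_n(\pi)\}$; by \eqref{eivi} one checks that $inv_i(\pi)$ ranges over $\{0, 1, \ldots, 2(n+1-i)-1\}$ depending on whether the entry at position $n+1-i$ is positive or negative (the positive case gives an ordinary inversion count in $\{0,\ldots, n-i\}$, the negative case gives an odd offset plus twice a count, landing in $\{1, 3, \ldots, 2(n-i)+1\}$, and together these exhaust $\{0,1,\ldots,2(n-i)+1\}$). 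Reindexing by $j = n+1-i$, the $j$th coordinate of the inversion table takes each value in $\{0, 1, \ldots, 2j-1\}$ exactly once as $\pi$ ranges suitably, and $inv_B(\pi)$ is the sum of these coordinates. This is precisely the ``balls in boxes'' interpretation Arslan records: placing $k$ balls in $n$ boxes with box $j$ holding at most $2j-1$ balls.

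First I would make the bijection $B_n \leftrightarrow \prod_{j=1}^n \{0,1,\ldots,2j-1\}$ precise: send $\pi$ to its inversion table, using \eqref{eivi} to verify injectivity and the cardinality count $|B_n| = \prod_{j=1}^n (2j) = 2^n n!$ to get surjectivity; this also re-derives the generating function $\sum_k i_B(n,k) q^k = \prod_{j=1}^n (1 + q + \cdots + q^{2j-1})$. Next I would encode an inversion table $(a_1, \ldots, a_n)$ with $0 \le a_j \le 2j-1$ as a lattice path in $\mathcal{P}^B_{n,k}$: read the path from $u=(0,0)$ left to right, and at level $j$ (the $j$th vertical line) take exactly $a_j$ North steps followed by one East step to advance to level $j+1$, with the final East step omitted after level $n$ so the path ends at height $\sum_j a_j = k$, i.e.\ at $v = (n,k)$. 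The constraint $a_j \le 2j-1$ is exactly the constraint that the path takes at most $2j-1$ North steps at level $j$. Conversely, any path in $\mathcal{P}^B_{n,k}$ decomposes uniquely, by its East steps, into the data $(a_1, \ldots, a_n)$. Composing the two bijections gives $i_B(n,k) = |I_B(n,k)| = |\mathcal{P}^B_{n,k}|$.

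The main obstacle is verifying cleanly that the inversion table is a genuine bijection with the right range in each coordinate — that is, that for fixed positions-and-signs the map is injective, and that allowing all sign choices fills out the full interval $\{0,\ldots,2j-1\}$ in coordinate $j$ rather than only a sub-range. This requires care with the two cases of \eqref{eivi}: when $r_{n+1-i}=0$ the value is an ordinary inversion count (even-indexed interval $\{0,\ldots,n-i\}$ is not the right phrasing; rather it is $\{0, 1, \ldots, n-i\}$), and when $r_{n+1-i}=1$ the value is $1 + 2(\text{count of smaller entries to the left})$, and one must check these two families of values, as the permutation varies, partition $\{0,1,\ldots,2(n-i)+1\}$. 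Once this partition is established the rest is bookkeeping; I would present it by fixing the suffix $\pi(n+1-i), \ldots, \pi(n)$ and inducting on $i$, mirroring the recursive structure already visible in \eqref{rib}. Everything else — the path encoding and its inverse — is routine and parallels the lattice path interpretation of $i(n,k)$ due to Ghemit and Ahmia quoted just before the statement.
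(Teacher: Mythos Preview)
Your approach is correct in outline and genuinely different from the paper's. The paper does \emph{not} factor through the inversion table; instead it writes down an explicit recursive algorithm that reads a lattice path step by step and builds the signed permutation directly: each East step appends the next new letter on the right, and each North step performs a specific swap-and-sign operation on the current word so as to create exactly one new type-$B$ inversion. Your route---inversion table as an intermediate object, then the trivial encoding of a tuple $(a_1,\ldots,a_n)$ with $0\le a_j\le 2j-1$ as a path---is cleaner and more conceptual, and it leverages the balls-in-boxes interpretation already quoted from \cite{Ars}. What the paper's construction buys in exchange is an explicit, human-readable map from paths to signed permutations (useful later when they want to read weights off the path), at the cost of a case-heavy algorithm whose bijectivity is only sketched.

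One genuine slip to fix: in the negative case $r_{n+1-i}=1$, the formula \eqref{eivi} is $1+2a+b$ where $a$ counts smaller entries to the left and $b=inv_i(\sigma)$ counts larger ones, with $a+b=n-i$. This simplifies to $(n-i)+1+a$, so the range is the \emph{consecutive} interval $\{\,n-i+1,\ldots,2(n-i)+1\,\}$, not the odd residues $\{1,3,\ldots,2(n-i)+1\}$ as you wrote. Your stated ranges do not partition $\{0,\ldots,2(n-i)+1\}$ (they overlap and leave gaps once $n-i\ge 2$), so the argument as written would fail at exactly the point you flag as the main obstacle. With the corrected range the two cases are complementary intervals, injectivity is immediate (the sign $r_{n+1-i}$ is read off from whether $inv_i(\pi)\le n-i$, and then $inv_i(\sigma)$ is recovered, giving back the Lehmer code of $\sigma$), and the cardinality count finishes the bijection.
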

\begin{proof}
To prove this theorem, it suffices to show that there is a bijection between the set of signed permutations of the hyperoctahedral
group $B_n$ having $k$ inversions of type $B$ (i.e., the inversions that satisfy the relations of Definition \ref{dib}) and  the set of lattice paths $\mathcal{P}^{B}_{n,k}$. So we proceed as follows:

\smallskip

\noindent For each path $P\in \mathcal{P}^{B}_{n,k}$, we can easily find the signed permutation of length $n$ having $k$ inversions of type $B$ associated to $P$, this permutation is obtained as follows: we associate to the point $(0,0)$ the entry $\emptyset$, the first step is necessarily an East step, we move to the point $(1,0)$ and we associate to this point the entry $1$. \textbf{At the point (1,0)}, we have two cases: \textbf{if} the next step of $P$ is an East step, we add the entry $2$ to the right of $1$ (i.e., $12$), \textbf{and if} the next step of $P$ is an North step, here we add the negative sign to the entry $1$ (i.e.,$-1$, which gives us an inversion of type $B$ from Definition \ref{dib}). \textbf{At the point (1,1)}, here, the next step of $ P $ is necessarily an East step (since at level $j=1$, the maximum number of North steps is $2j-1=1$), we add the entry $ 2 $ to the right of $ -1 $ (i.e., $-12$). We move to \textbf{the point (2,1)}, here we have two cases :   \textbf{if} the next step of $ P $ is an East step, we add the entry $3$ to the right of $ -12 $ (i.e, $-123$), \textbf{else if} the next step of $ P $ is an North step, here we permute -1 with 2 and we permute the signs (i.e. $-21$, which gives us an inversion of type $B$).

\medskip
 
In the general case, if at a given point we have the signed permutation $\pi=\pi_{1}\cdots\pi_{l}$. \textbf{If} the next step of $P$ is an East step, we add the entry $(l+1)$ to the right of $\pi$ (i.e., $\pi_{1}\cdots\pi_{l}(l+1)$), \textbf{if} the next step of $P$ is an North step, here we have three cases : \textbf{if $\pi_{l}=1$} we add the negative sign to $\pi_{l}$ (i.e., $\pi_{1}\cdots\pi_{l-1}(-1)$, which gives us an inversion of type $B$), \textbf{if $\pi_{l}>0$} we search $\pi_{m}$ such that $\pi_{l}=\mid\pi_{m}\mid+1$, here if $\pi_{m}>0$, we permute $\pi_{m}$ with $\pi_{l}$ (i.e., $\pi_{1}\cdots\pi_{l}\cdots\pi_{m}$, which gives us an inversion of type $B$), and if $\pi_{m}<0$, we permute $\pi_{m}$ with $\pi_{l}$ and we permute their signs (i.e., $\pi_{1}\cdots-\pi_{l}\cdots-\pi_{m}$, which gives us an inversion of type $B$), \textbf{if $\pi_{l}<0$} we search $\pi_{m}$  such that $\mid\pi_{l}\mid=\mid\pi_{m}\mid-1$, here if $\pi_{m}<0$, we permute $\pi_{m}$ with $\pi_{l}$ (i.e., $\pi_{1}\cdots\pi_{l}\cdots\pi_{m}$, which gives us an inversion of type $B$), and if $\pi_{m}>0$, we permute $\pi_{m}$ with $\pi_{l}$ and we permute their signs (i.e., $\pi_{1}\cdots-\pi_{l}\cdots-\pi_{m}$, which gives us an inversion of type $B$). We proceed the same operations with next points until we arrive to the point $(n,k)$, that gives us the desired permutation having $k$ inversions of type $B$. An example is shown in Figure \ref{fg3}.
\end{proof}

\begin{figure}[h!]
\begin{center}
\begin{tikzpicture}
\draw[step=0.8cm,color=black!30] (-0.81,0) grid (1.6,7.2);
\draw [line width=1pt](-0.8,0) -- (0,0) -- (0,0.8) -- (0.8,0.8) -- (0.8,3.2)--(1.6,3.2)--(1.6,4)--(1.6,4.8)--(1.6,5.6)--(1.6,6.4)--(1.6,7.2);

\fill[blue] (-0.8,0) circle(1.9pt) ;
\fill[blue] (0,0) circle(1.9pt) ;
\fill[blue] (0,0.8) circle(1.9pt) ;
\fill[blue] (0.8,0.8) circle(1.9pt) ;
\fill[blue] (0.8,1.6) circle(1.9pt) ;
\fill[blue] (0.8,2.4) circle(1.9pt) ;
\fill[blue] (0.8,3.2) circle(1.9pt) ;
\fill[blue] (1.6,3.2) circle(1.9pt) ;
\fill[blue] (1.6,4) circle(1.9pt) ;
\fill[blue] (1.6,4.8) circle(1.9pt) ;
\fill[blue] (1.6,5.6) circle(1.9pt) ;
\fill[blue] (1.6,6.4) circle(1.9pt) ;
\fill[blue] (1.6,7.2) circle(1.9pt) ;

\node[blue,rectangle] at (-0.8,-0.4) {$\emptyset$};
\node[blue,rectangle] at (0.0,-0.4) {$1$};
\node[blue,rectangle] at (-0.4,0.9) {$-1$};
\node[blue,rectangle] at (1.2,0.9) {$-12$};
\node[blue,rectangle] at (1.2,1.7) {$-21$};
\node[blue,rectangle] at (1.4,2.5) {$-2-1$};
\node[blue,rectangle] at (0,3.3) {$-1-2$};
\node[blue,rectangle] at (2.4,3.3) {$-1-23$};
\node[blue,rectangle] at (2.4,4.1) {$-1-32$};
\node[blue,rectangle] at (2.4,4.9) {$-2-31$};
\node[blue,rectangle] at (2.7,5.7) {$-2-3-1$};
\node[blue,rectangle] at (2.7,6.5) {$-1-3-2$};
\node[blue,rectangle] at (2.7,7.3) {$-1-2-3$};
\node[right=0.1pt] at (1.1,7.4){$v$};
\node at (-1.1,0){$u$};
\end{tikzpicture}
\end{center}
	\caption{The path of $\pi =-1-2-3$ which has $9$ inversions of type $B$.}
	\label{fg3}

\end{figure}
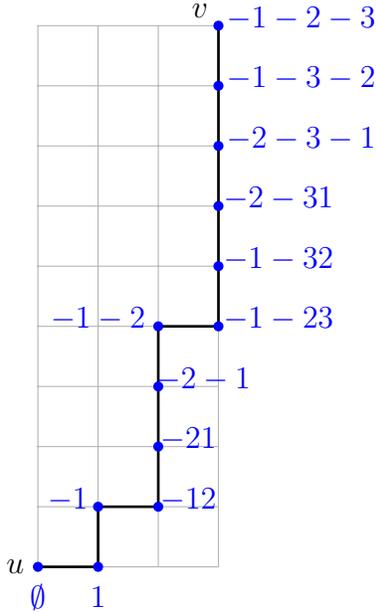 

\subsection{Partition/tiling interpretation}
The integer partitions \cite{And3, And, And1,And2} plays important roles in combinatorics, number theory and other
related mathematical branches.

\medskip
In this subsection, we give a partition/tiling interpretation for the Mahonian numbers of type $B$. For the  partition/tiling interpretation of the classical Mahonian number, we refer the readers  to the paper of Ghemit and Ahmia \cite{GA1}.

\medskip
Let $n$ and $k$ denote two nonnegative integers. A partition $\lambda$ of size $n$ and length $k$ is an $k$-tuple $\lambda=(\lambda_1,\lambda_2,\ldots,\lambda_k)$ of integers such that
$ \lambda_{1}\geq \lambda_{2}\geq \cdots \geq \lambda_k\geq1$, $ \lambda_{1}+ \lambda_{2}+ \cdots +\lambda_k=n$ and each $\lambda_{j}$ is a part of  $\lambda$. We shall refer by $l(\lambda)$ to the number of parts of the partition $\lambda$, and the multiplicity $m_{j}= m_{j}(\lambda)$ of part $j$ in  $\lambda$ is is the number of occurrences of $j$ as a part in $\lambda$. For example, $5$ can be partitioned in seven distinct ways: $5,4+1, 3+2, 3+1+1, 2+2+1 , 2+1+1+1,1+1+1+1+1$. Notice that the unique partition of $0$ is the empty partition $\emptyset$, which has length $0$. The partition $\lambda=(2,2,1)$ has a
number of parts $l(\lambda) = 3$ and can represent it geometrically by a Young diagram, see Figure \ref{fc}.

\begin{figure}[h]
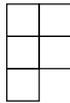

\begin{center}
\ytableausetup
{mathmode, boxframe=normal, boxsize=1em}
\begin{ytableau}

 &   \\
 &   \\
   \\
\end{ytableau}\\
\caption{The partition $\lambda=(2,2,1)$.}\label{fc}
\end{center}
\end{figure}

Recall to the path interpretation of Mahonian numbers of type $B$ (i.e., Theorem \ref{thib}), $i_B(n,k)$ counts the number of lattice paths from
$(0,0)$ to $(n,k)$ taking at most $2j-1$ North steps at the level $j$ for $j\geq 1$. So, we can easily see the correspondence between North-East paths and partitions, just see
that each number of boxes above and to the left of the path. 

\medskip

Let $\mathcal{P}r_{n,k}$ the set of partitions into $k$ parts, and the largest part is less or equal to $n$, such that each part $j$ can be repeated at most $2j-1$ times. Then, we obtain the following partition interpretation of Mahonian numbers of type $B$. 
\begin{theorem}\label{thib2}
For $n\geq 1$ and $0\leq k \leq n^2$, the Mahonian number of type $B$ counts the number of partitions into $k$ parts, and the largest part is less or equal to $n$, such that each part $j$ can be repeated at most $2j-1$ times, that is,
\[i_B(n,k)=\mid \mathcal{P}r_{n,k}\mid.\]
\end{theorem}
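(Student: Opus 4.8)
The plan is to establish a bijection between the set $\mathcal{P}^{B}_{n,k}$ of lattice paths already introduced in Theorem \ref{thib} and the set $\mathcal{P}r_{n,k}$ of partitions described in the statement. Since Theorem \ref{thib} gives $i_B(n,k)=|\mathcal{P}^{B}_{n,k}|$, it suffices to exhibit a size-preserving correspondence $\mathcal{P}^{B}_{n,k}\longleftrightarrow\mathcal{P}r_{n,k}$. This is the standard ``staircase'' bijection between monotone lattice paths and partitions fitting in a box: given a path $P$ from $(0,0)$ to $(n,k)$ that uses only East and North steps, one reads off, for each of the $k$ North steps, the number of East steps that precede it; writing these $k$ numbers in the order they occur (from the top North step down) produces a weakly decreasing sequence $\lambda_1\geq\lambda_2\geq\cdots\geq\lambda_k\geq 0$, i.e. the partition whose Young diagram is the region lying above and to the left of $P$.

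The steps, in order, are as follows. First I would recall precisely the box normalization: a path in $\mathcal{P}^{B}_{n,k}$ lives in the rectangle $[0,n]\times[0,k]$ and the levels are the vertical strips between $x=j-1$ and $x=j$ for $j=1,\ldots,n$. Second, I would define the map $\Phi\colon P\mapsto\lambda(P)$ by letting $\lambda_i(P)$ be the $x$-coordinate at which the $i$-th North step (counted from the top) is taken; as noted in the paragraph preceding the theorem, this is exactly the count of boxes above-and-left of $P$ in each row, so $\lambda(P)$ is a partition with at most $k$ parts and largest part at most $n$. Third — and this is the one point that needs the specific level condition rather than being a generic fact — I would check that the constraint ``at most $2j-1$ North steps at level $j$'' translates precisely into ``the part $j$ occurs at most $2j-1$ times in $\lambda(P)$'': a North step occurs at level $j$ iff its $x$-coordinate equals $j$, iff the corresponding part of $\lambda(P)$ equals $j$, so the number of North steps in strip $j$ equals the multiplicity $m_j(\lambda(P))$. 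Hence $P\in\mathcal{P}^{B}_{n,k}$ forces $m_j(\lambda(P))\leq 2j-1$ for all $j$, i.e. $\lambda(P)\in\mathcal{P}r_{n,k}$. Fourth, I would describe the inverse map: from $\lambda\in\mathcal{P}r_{n,k}$ build the path whose row $i$ (from the top) has $\lambda_i$ East steps followed by one North step; the monotonicity of $\lambda$ guarantees this is a valid NE-path, the bound $\lambda_1\leq n$ puts it in the right rectangle, and $m_j(\lambda)\leq 2j-1$ gives back the level restriction. Finally I would note $\Phi$ and its inverse are mutually inverse and that the number of North steps, namely $k$, equals the number of parts, so $|\mathcal{P}^{B}_{n,k}|=|\mathcal{P}r_{n,k}|$, and combine this with Theorem \ref{thib}.

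The main obstacle is essentially bookkeeping rather than conceptual: one must be careful about the degenerate parts (North steps taken at $x=0$ would correspond to parts equal to $0$, which are not allowed in the paper's definition of a partition). Since $\mathcal{P}r_{n,k}$ requires $\lambda_k\geq 1$, I would observe that in $\mathcal{P}^{B}_{n,k}$ the level-$0$ strip (to the left of $x=0$) simply does not exist — the path starts at $(0,0)$ and the first step must be East, exactly as spelled out in the proof of Theorem \ref{thib} — so no North step ever has $x$-coordinate $0$ and every part of $\lambda(P)$ is genuinely $\geq 1$; conversely the inverse construction never produces a $0$-row. The only other thing to watch is the edge cases $k=0$ (empty path $\leftrightarrow$ empty partition) and $k=n^2$ (the full staircase with $m_j=2j-1$ for every $j$), which the bijection handles automatically. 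I would present the argument concisely, since it is a direct adaptation of the well-known path/partition dictionary together with the single observation about multiplicities, and may even choose to omit the routine verification that the two maps are inverse, citing the analogous treatment in Ghemit and Ahmia \cite{GA1}.
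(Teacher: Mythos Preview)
Your proposal is correct and follows essentially the same approach as the paper: the paper does not give a formal proof of Theorem~\ref{thib2} but simply states, in the paragraph preceding it, that ``we can easily see the correspondence between North-East paths and partitions, just see that each number of boxes above and to the left of the path,'' and then relies on Theorem~\ref{thib}. Your write-up spells out this bijection carefully (including the multiplicity translation $m_j(\lambda)=\#\{\text{North steps at level }j\}$ and the observation that the forced initial East step prevents zero parts), which is exactly the argument the paper leaves implicit.
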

For example  in Figure \ref{fgr}, each row represents a part, and then,
these parts give us a partition. 

\begin{figure}[h!]
	
	\begin{center}
		\begin{tikzpicture}

		\draw[step=0.8cm,color=black!30] (-0.81,0) grid (1.6,2.4);
		\draw [line width=1pt](-0.8,0) -- (0,0) -- (0,0.8) -- (0.8,0.8) -- (0.8,2.4)--(1.6,2.4);
		
		\fill[blue] (-0.8,0) circle(1.9pt) ;
		\fill[blue] (1.6,2.4) circle(1.9pt) ;
		\node[right=0.1pt] at (1.7,2.4){$v$};
		\node at (-1.1,0){$u$};

		\fill[fill=gray!50,draw=black!50] (-0.75,0.05) rectangle (-0.05,0.75);	\fill[fill=gray!50,draw=black!50] (-0.75,0.85) rectangle (-0.05,1.55);
		\fill[fill=gray!50,draw=black!50] (0.05,0.85) rectangle (0.75,1.55);
		\fill[fill=gray!50,draw=black!50] (-0.75,1.65) rectangle (-0.05,2.35);
		\fill[fill=gray!50,draw=black!50] (0.05,1.65) rectangle (0.75,2.35);	
		\end{tikzpicture}
	\end{center}
	
	\caption{The path corresponds to the partition $\lambda=(2,2,1)$.}
	 \label{fgr}
\end{figure}

\medskip

To establish the tiling interpretation of Mahonian numbers of type $B$, we give the following definition: let $\mathcal{T}^{B}_{n,k}$ be the set of weighted tilings of an $(n+k)\times 1$-board in which we use only $n$ green squares and $k$ orange squares where the number of successive  orange squares  is at most $2j-1$ if there are $j$ green squares before. Thus, we have the following tiling interpretation.
\begin{theorem}\label{thib3}
	The $i_B(n,k)$ counts the number of tilings of size $(n+k)\times 1$-board taking only $n$ green squares and $k$ orange squares,
where the number of successive 
orange squares is at most $2j-1$ if there are $j$ green squares before, that is,
\[i_B(n,k)=\mid \mathcal{T}^{B}_{n,k} \mid.\]
\end{theorem}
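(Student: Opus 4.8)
The plan is to exhibit an explicit bijection between the set $\mathcal{T}^{B}_{n,k}$ of weighted tilings and one of the combinatorial families already counted by $i_B(n,k)$, most conveniently the set of lattice paths $\mathcal{P}^{B}_{n,k}$ from Theorem \ref{thib} (or, equivalently, the partitions $\mathcal{P}r_{n,k}$ of Theorem \ref{thib2}). Since the tiling is nothing more than a linear word in two letters, the natural map is to read a tiling of the $(n+k)\times 1$-board from left to right and translate each green square into an East step and each orange square into a North step. This produces a lattice path from $(0,0)$ to $(n,k)$ using exactly $n$ East steps and $k$ North steps, and conversely every such path determines a unique tiling. So the content of the theorem is entirely contained in checking that the constraint on the tilings matches the constraint on the paths.

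The key step is therefore the translation of the ``staircase'' condition. In a path of $\mathcal{P}^{B}_{n,k}$ the ``level'' $j$ is the vertical strip between the $j$th and $(j+1)$th East steps, i.e.\ the North steps that occur after exactly $j$ East steps but before the $(j+1)$th; the admissibility condition is that this block of North steps has size at most $2j-1$ for each $j\geq 1$ (and, implicitly, no North step may precede the first East step, corresponding to $j=0$ giving the bound $2\cdot 0 - 1 < 1$). Under the green$\,\leftrightarrow\,$East, orange$\,\leftrightarrow\,$North dictionary, ``the North steps occurring after exactly $j$ East steps'' corresponds precisely to ``a maximal run of orange squares preceded by exactly $j$ green squares,'' and ``size at most $2j-1$'' is exactly the condition defining $\mathcal{T}^{B}_{n,k}$. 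Hence the map restricts to a bijection $\mathcal{P}^{B}_{n,k}\to\mathcal{T}^{B}_{n,k}$, and combining with Theorem \ref{thib} gives $i_B(n,k)=|\mathcal{T}^{B}_{n,k}|$.

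I would write this up as a short proof: define the map $\phi$ sending a tiling to the word obtained by replacing each green square by $E$ and each orange square by $N$, observe that $\phi$ is a bijection from all tilings of the $(n+k)\times 1$-board with $n$ green and $k$ orange squares onto all lattice paths from $(0,0)$ to $(n,k)$, then verify that a tiling lies in $\mathcal{T}^{B}_{n,k}$ if and only if its image lies in $\mathcal{P}^{B}_{n,k}$, since in both cases the requirement is that any maximal block of $N$'s (orange squares) appearing right after the $j$th $E$ (green square) has length at most $2j-1$. One should also note the boundary behaviour at $j=0$: no orange square may appear before the first green square, matching the fact that no North step appears at level $0$.

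The only genuinely delicate point — and the one I would state carefully — is the bookkeeping of \emph{which} green-square count governs a given run of orange squares: a run of orange squares is constrained by the number of green squares strictly to its left, and one must make sure this is the same indexing convention as the ``level'' in the path interpretation. Once that is pinned down the argument is routine, so I would keep the proof to a few lines and perhaps illustrate with the running example $\pi=-1-2-3$ (a $6\times 1$-board: green, orange, green, orange, orange, orange, giving one green then one orange, then one green then... matching the path of Figure \ref{fg3}), though since the analogous classical statement in \cite{GA1} is proved this way one could equally omit the details ``for brevity'' as the authors do elsewhere.
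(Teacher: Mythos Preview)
Your proposal is correct and follows exactly the same approach as the paper: the authors' proof is a two-line observation that the map sending each green square to an East step and each orange square to a North step gives a (weight-preserving) bijection between $\mathcal{T}^{B}_{n,k}$ and $\mathcal{P}^{B}_{n,k}$. Your write-up is in fact more careful than the paper's, since you spell out why the staircase constraint on orange runs matches the level constraint on North steps and address the $j=0$ boundary case, neither of which the authors mention explicitly.
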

\begin{proof}
The set of tilings $\mathcal{T}^{B}_{n,k}$ is in bijection with set of paths $\mathcal{P}^{B}_{n,k}$. Each green square is corresponded to an East step, and each orange square to a North step, and vice versa. Moreover, this bijection is weight-preserving.
\end{proof}

As an example, in Figure \ref{tqpq} we have a tiling $T\in \mathcal{T}^{B}_{3,3}$  and its corresponded path.
\begin{figure}[h!]
	
	\begin{center}
		\begin{tikzpicture}
		
		\draw[step=0.8cm,color=black!30] (0,0) grid(4.8,0.8);
		\fill[fill=green!60,draw=black!50] (0.05,0.05) rectangle (0.75,0.75);
		\fill[fill=orange!90,draw=black!50] (0.85,0.05) rectangle (1.55,0.75);
		\fill[fill=green!60,draw=black!50] (1.65,0.05) rectangle (2.35,0.75);
		\fill[fill=orange!90,draw=black!50] (2.45,0.05) rectangle (3.15,0.75);
		\fill[fill=orange!90,draw=black!50] (3.25,0.05) rectangle (3.95,0.75);
		\fill[fill=green!60,draw=black!50] (4.05,0.05) rectangle (4.75,0.75);

		\end{tikzpicture}\\
		\vspace{0.1cm}
		\begin{tikzpicture}

		\draw[step=0.8cm,color=black!30] (-0.81,0) grid (1.6,2.4);
		\draw [line width=1pt](-0.8,0) -- (0,0) -- (0,0.8) -- (0.8,0.8) -- (0.8,2.4)--(1.6,2.4);
		
		\fill[blue] (-0.8,0) circle(1.9pt) ;
		\fill[blue] (1.6,2.4) circle(1.9pt) ;
		\node[right=0.1pt] at (1.7,2.4){$v$};
		\node at (-1.1,0){$u$};
		
		\fill[fill=gray!50,draw=black!50] (-0.75,0.05) rectangle (-0.05,0.75);	\fill[fill=gray!50,draw=black!50] (-0.75,0.85) rectangle (-0.05,1.55);
		\fill[fill=gray!50,draw=black!50] (0.05,0.85) rectangle (0.75,1.55);
		\fill[fill=gray!50,draw=black!50] (-0.75,1.65) rectangle (-0.05,2.35);
		\fill[fill=gray!50,draw=black!50] (0.05,1.65) rectangle (0.75,2.35);

		\end{tikzpicture}
		
	\end{center}
	\caption{A tiling  $T$ and its corresponded path.}\label{tqpq}
\end{figure}
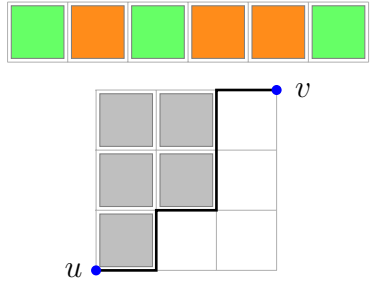
\section{$q$-Analogue of Mahonian numbers of type $B$}\label{AM}
In this section, we propose a $q$-analogue of Mahonian numbers of type $B$ and we study
some basic properties. To do this, we use a new statistics on signed permutations of hyperoctahedral group $B_n$, this statistics based on inversion  $inv_i(\pi)$ defined by Arslan \cite{Ars}. Besides that, we give some combinatorial interpretations
by lattice paths/partitions and tilings for this analogue of Mahonian numbers of type $B$.
\subsection{Definition and some identities}
Let me now give the following statistic: For each signed permutation $\pi \in B_n$, let
\begin{equation}\label{statt}
\omega(\pi)=\sum_{i=1}^n (n-i+1)inv_i(\pi).   
\end{equation}

Using this statistic, we propose in the following definition  a $q$-analogue of $i_B(n, k)$.
\begin{definition}\label{dfq}
For $n\geq 1$ and $0\leq k \leq n^2$, we define the $q$-Mahonian number of type $B$ as follows:
\begin{equation}\label{eqan}
i_{B_q}(n,k)=\sum_{\pi \in I_B(n,k)}q^{\omega(\pi)},
\end{equation}
where $I_B(n,k)=\{\pi \in B_n: inv_B(\pi)=k\}$.
\end{definition}

If $\pi \in C_j$, for $j\in\{-n,\ldots,-1,1,\ldots,n\}$, we can rewrite \eqref{statt} from \eqref{rib} as follows:
\begin{equation}\label{wrib}
\omega(\pi)=\omega(\pi_{\tau,j})=\begin{cases}
	n(n-j)+\omega(\tau), &\text{if \ } j>0,\\
	n(n-j-1)+\omega(\tau),&\text{if \ }j<0,
	\end{cases} 
\end{equation}
where $\tau \in P\left([n]\backslash\{|j|\}\right)$.

\medskip

By Definition \ref{dfq} and relation \eqref{wrib}, we can establish that the $q$-Mahonian numbers of type $B$ satisfy the following recurrence relation which is the $q$-analogue of the relation \eqref{eb2}.
\begin{theorem}\label{qrl}
For $n>1$, we have
\begin{equation}\label{rqr}
i_{B_q}(n,k)=\sum_{j=0}^{2n-1}q^{nj}i_{B_q}(n-1,k-j),   
\end{equation}
where $i_{B_q}(1,0)=1$ and $i_{B_q}(1,1)=q$.
\end{theorem}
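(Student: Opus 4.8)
The plan is to mimic the derivation of the non-$q$ recurrence \eqref{eb2} but carrying along the weight $\omega$. First I would partition $B_n$ according to the image of $n$, writing $B_n=\biguplus_{j=-n}^{n}C_j$ with $C_j=\{\pi\in B_n:\pi(n)=j\}$, exactly as set up in Section~\ref{intro}. For $\pi\in C_j$ I would use the canonical factorization $\pi=\pi_{\tau,j}$ with $\tau\in P([n]\setminus\{|j|\})$, and invoke the two key identities already established: the inversion-count relation \eqref{rib}, which gives $inv_B(\pi)=n-j+inv_B(\tau)$ if $j>0$ and $inv_B(\pi)=n-j-1+inv_B(\tau)$ if $j<0$; and the weight relation \eqref{wrib}, which gives $\omega(\pi)=n(n-j)+\omega(\tau)$ if $j>0$ and $\omega(\pi)=n(n-j-1)+\omega(\tau)$ if $j<0$.

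Next I would substitute into the defining sum $i_{B_q}(n,k)=\sum_{\pi\in I_B(n,k)}q^{\omega(\pi)}$. Splitting over the blocks $C_j$ and writing, for $j>0$, $r=n-j$ (so $r$ ranges over $0,\dots,n-1$) and, for $j<0$, $r=n-j-1$ (so $r$ ranges over $n,\dots,2n-1$), the condition $inv_B(\pi)=k$ becomes $inv_B(\tau)=k-r$, and the weight becomes $\omega(\pi)=nr+\omega(\tau)$ in both cases. Since $P([n]\setminus\{|j|\})$ is, as a group with its $inv_B$ and $\omega$ statistics, isomorphic to $B_{n-1}$ (relabel the ground set in increasing order; this is the content of the $\pi_{\tau,j}$ construction and is exactly how \eqref{eb2} is proved in \cite{Ars}), the sub-sum over each $C_j$ equals $q^{nr}\sum_{\tau:\,inv_B(\tau)=k-r}q^{\omega(\tau)}=q^{nr}i_{B_q}(n-1,k-r)$. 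Collecting the contributions as $r$ runs over $0,1,\dots,2n-1$ yields $i_{B_q}(n,k)=\sum_{r=0}^{2n-1}q^{nr}i_{B_q}(n-1,k-r)$, which is \eqref{rqr}. The base case is a direct check: $B_1=\{\mathrm{id},t_1\}$ with $inv_B(\mathrm{id})=0$, $inv_B(t_1)=1$, $\omega(\mathrm{id})=0$, $\omega(t_1)=1$, so $i_{B_q}(1,0)=1$ and $i_{B_q}(1,1)=q$.

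The only genuine subtlety — and the step I would write most carefully — is verifying that $\tau\mapsto$ its statistics on $P([n]\setminus\{|j|\})$ is compatible with the statistics on $B_{n-1}$; that is, that relabeling the ground set $[n]\setminus\{|j|\}$ monotonically to $[n-1]$ preserves both $inv_B$ and $\omega$. For $inv_B$ this is inherited from \cite{Ars} (it is precisely what makes \eqref{eb2} hold), and for $\omega$ it follows because $\omega$ is built coordinate-wise from the same $inv_i$ quantities via the fixed coefficients $n-i+1$, and these coefficients depend only on the position index $i$, not on the values being permuted; the relation \eqref{wrib} already packages this observation. Everything else is bookkeeping: matching the range $j\in\{1,\dots,n\}\cup\{-1,\dots,-n\}$ bijectively with $r\in\{0,\dots,2n-1\}$, and confirming the boundary convention $i_{B_q}(n-1,k-r)=0$ when $k-r\notin[0,(n-1)^2]$ so that the stated finite sum is literally correct.
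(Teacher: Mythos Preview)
Your proposal is correct and follows essentially the same approach as the paper: decompose $I_B(n,k)$ according to $\pi(n)=j$, apply \eqref{rib} and \eqref{wrib}, reindex via $r=n-j$ (for $j>0$) and $r=n-j-1$ (for $j<0$), and identify each inner sum with $i_{B_q}(n-1,k-r)$. If anything, you are more careful than the paper at the one genuine step it glosses over, namely that the relabeling $P([n]\setminus\{|j|\})\cong B_{n-1}$ preserves both $inv_B$ and $\omega$.
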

\begin{proof}It is clear  that $i_{B_q}(1,0)=1$ and $i_{B_q}(1,1)=q$. Now from \eqref{statt} and Definition \ref{dfq}, for $n\geq 2$, we have
\begin{eqnarray*}
i_{B_{q}}(n,k) &=&\sum_{\pi \in I_{B}(n,k)}q^{w(\pi )}.
\end{eqnarray*}
Using \eqref{wrib}, we obtain
\begin{eqnarray*}
i_{B_{q}}(n,k) &=&\sum_{j=-n}^{-1}q^{n(n-j-1)}\sum_{\substack{ \tau \in P\left([n]\backslash\{|j|\}\right) 
\\ n-j-1+inv_{B}(\tau )=k}}q^{w(\tau )}+\sum_{j=1}^{n}q^{n
(n-j)}\sum_{\substack{ \tau \in P\left([n]\backslash\{|j|\}\right)  \\ n-j+inv_{B}(\tau )=k}}%
q^{w(\tau )}
\end{eqnarray*}
\begin{eqnarray*}
&=&\sum_{j=n}^{2n-1}q^{nj}\sum_{\substack{ \tau \in P\left([n]\backslash\{|n-j-1|\}\right)  \\ %
j+inv_{B}(\tau )=k}}q^{w(\tau )}+\sum_{j=0}^{n-1}q^{n j}\sum
_{\substack{ \tau \in P\left([n]\backslash\{|n-j|\}\right)  \\ j+inv_{B}(\tau )=k}}%
q^{w(\tau )} \\
&=&\sum_{j=n}^{2n-1}q^{n j}\sum_{\substack{ \tau \in P\left([n]\backslash\{|n-j-1|\}\right)  \\ %
inv_{B}(\tau )=k-j}}q^{w(\tau )}+\sum_{j=0}^{n-1}q^{n j}\sum
_{\substack{ \tau \in P\left([n]\backslash\{|n-j|\}\right)  \\ inv_{B}(\tau )=k-j}}%
q^{w(\tau )} \\
&=&\sum_{j=0}^{2n-1}q^{n j}\sum_{\substack{ \tau \in P\left([n]\backslash\{|n-j|\}\right)  \\ %
inv_{B}(\tau )=k-j}}q^{w(\tau )} \\
&=&\sum_{j=0}^{2n-1}q^{n j}i_{B_{q}}(n-1,k-j)
\end{eqnarray*}
which gives the desired relation.
\end{proof}
We call the table values of $i_{B_q}(n, k)$ as the {\bf $q$-Mahonian triangle of type $B$}. See, Table 2.
\begin{center}
	
	\begin{tabular}{|l|l|l|l|l|l|}
	\hline $n/k$ & $0$ & $1$ & $2$ & $3$ & $4$  \\
	\hline $1$ & $1$ & $q$  & &  &  \\
	\hline $2$ & $1$ & $q^2+q$ & $q^4+q^3$  & $q^6+q^5$ & $q^7$  \\
	\hline $3$ & $1$ & $q^3+q^2+q$ & $q^{6}+q^5+2q^4+q^3$ & $q^{9}+q^8+2q^7+2q^6+q^5$ &$q^{12}+q^{11}+2q^{10}+\cdots$ \\
	
	\hline
	\end{tabular}
	
\end{center}
\begin{center}
	Table 2: The $q$-Mahonian triangle of type $B$.
\end{center}

\bigskip

From Theorem \ref{qrl}, we can deduce the following recurrence relation for the $q$-Mahonian numbers of type $B$.
\begin{corollary}
For $n>1$ and $0\leq k\leq n^2$, we have
\begin{equation}\label{qrecb}
i_{B_q}(n,k)=i_{B_q}(n-1,k)+q^ni_{B_q}(n-1,k-1)-q^{2n^2}i_{B_q}(n-1,k-2n).
\end{equation}
\end{corollary}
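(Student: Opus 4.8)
The plan is to derive \eqref{qrecb} directly from the $q$-recurrence \eqref{rqr} by a standard telescoping argument, exactly mirroring how \eqref{recb} was obtained from \eqref{eb2}. Starting from Theorem \ref{qrl}, I would write
\[
i_{B_q}(n,k)=\sum_{j=0}^{2n-1}q^{nj}i_{B_q}(n-1,k-j)
\]
and separately
\[
q^n i_{B_q}(n,k-1)=\sum_{j=0}^{2n-1}q^{n(j+1)}i_{B_q}(n-1,k-1-j)=\sum_{j=1}^{2n}q^{nj}i_{B_q}(n-1,k-j),
\]
where in the second equality I shift the summation index $j\mapsto j-1$. Subtracting the second display from the first, all the middle terms with $1\le j\le 2n-1$ cancel, leaving only the $j=0$ term of the first sum and the $j=2n$ term of the second sum:
\[
i_{B_q}(n,k)-q^n i_{B_q}(n,k-1)=i_{B_q}(n-1,k)-q^{2n^2}i_{B_q}(n-1,k-2n),
\]
since $q^{n\cdot 2n}=q^{2n^2}$. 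Rearranging gives precisely \eqref{qrecb}.

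I would be slightly careful about boundary behaviour at the edges of the triangle, but this is routine: the convention $i_{B_q}(n,k)=0$ unless $0\le k\le n^2$ makes both telescoping sums legitimate as finite sums, and the identity holds for all $k$ in the stated range including the endpoints (for $k=0$ one checks $i_{B_q}(n,0)=1=i_{B_q}(n-1,0)$, consistent with the right-hand side since the other two terms vanish). No genuine obstacle is expected here — the only thing to get right is the exponent bookkeeping, namely that multiplying $i_{B_q}(n,k-1)$ by $q^{n}$ (not by some other power) is exactly what aligns the two sums so that the index shift produces a clean telescoping, and that the surviving high-order term carries exponent $n\cdot 2n=2n^2$. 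I would present the computation as a short displayed \texttt{align*} block and conclude.
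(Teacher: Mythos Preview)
Your telescoping argument from \eqref{rqr} is exactly the intended derivation, and the computation is carried out correctly. However, what you obtain after rearranging is
\[
i_{B_q}(n,k)=i_{B_q}(n-1,k)+q^{n}\,i_{B_q}(n,k-1)-q^{2n^2}\,i_{B_q}(n-1,k-2n),
\]
with middle term $q^{n}i_{B_q}(n,k-1)$, not $q^{n}i_{B_q}(n-1,k-1)$ as printed in \eqref{qrecb}. These are genuinely different expressions, so your final sentence ``Rearranging gives precisely \eqref{qrecb}'' is not accurate as written.

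In fact the discrepancy is a typo in the stated corollary, not in your work: the identity you derive is the correct $q$-analogue of \eqref{recb} (which likewise has $i_B(n,k-1)$, not $i_B(n-1,k-1)$), and the version printed in \eqref{qrecb} already fails at $n=2$, $k=2$, since from Table~2 one has $i_{B_q}(2,2)=q^4+q^3$ while $i_{B_q}(1,2)+q^2 i_{B_q}(1,1)-q^8 i_{B_q}(1,-2)=q^3$. So your proof is fine; you should simply flag that the middle term in the statement must read $q^{n}i_{B_q}(n,k-1)$, and then your telescoping argument proves it verbatim.
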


The following result is the $q$-analogue of the symmetry relation \eqref{eb1}.
\begin{theorem} For $n\geq 1$ and $0\leq k \leq n^2$, we have
\begin{equation}\label{qrsym}
i_{B_q}(n,k)=q^{\frac{n(n+1)(4n-1)}{6}}i_{B_{\frac{1}{q}}}(n,n^2-k).
\end{equation}
\end{theorem}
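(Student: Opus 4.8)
The plan is to establish the symmetry relation by pairing each signed permutation $\pi \in I_B(n,k)$ with a canonical partner in $I_B(n, n^2 - k)$ and tracking how the statistic $\omega$ transforms under this pairing. The natural candidate for the involution is $\pi \mapsto \pi_0 \cdot \pi$ (or equivalently $\pi \mapsto \bar\pi$ where $\bar\pi(i) = -\pi(i)$ for all $i$), which is exactly the map that, at the level of $\text{inv}_B$, sends $\text{inv}_B(\pi) = k$ to $\text{inv}_B(\bar\pi) = n^2 - k$; this is the bijective content already underlying relation \eqref{eb1}. First I would verify precisely how each coordinate statistic $\text{inv}_i$ behaves: from \eqref{eivi}, flipping the sign $r_{n+1-i} \mapsto 1 - r_{n+1-i}$ while keeping $\sigma$ fixed replaces $\text{inv}_i(\pi)$ by something of the form $2(\text{something}) + 1 - (\text{original offset})$, and a short computation should show $\text{inv}_i(\bar\pi) = (2i - 1) - \text{inv}_i(\pi) + (\text{a correction coming from } \text{inv}_i(\sigma) \text{ versus } \text{inv}_i(\sigma))$. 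The cleaner route is probably to work with the lattice-path / tiling model of Theorem~\ref{thib} or \ref{thib3}: reflecting a path in $\mathcal{P}^B_{n,k}$ through the "anti-diagonal" of each level sends North steps to the complementary North steps at that level, giving a path in $\mathcal{P}^B_{n, n^2-k}$, and one then reads off how the weight $\omega$ changes level by level.

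The key computation is the following. Using the generating-function form, $\sum_k i_{B_q}(n,k) z^k$ should factor (via the recurrence \eqref{rqr}) as a product of $q$-bracket-type factors $\prod_{i=1}^{n}\bigl(1 + q^{i} z + q^{2i} z^2 + \cdots\bigr)$ truncated appropriately — more precisely, iterating \eqref{rqr} gives
\[
\sum_{k=0}^{n^2} i_{B_q}(n,k)\, z^k \;=\; \prod_{i=1}^{n}\Bigl(1 + q^{i} z + (q^{i} z)^2 + \cdots + (q^{i} z)^{2i-1}\Bigr),
\]
after checking the exponent bookkeeping against the base cases $i_{B_q}(1,0)=1$, $i_{B_q}(1,1)=q$. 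Granting this factorization, the symmetry is a purely algebraic manipulation: in each factor $1 + q^i z + \cdots + (q^i z)^{2i-1}$, substituting $q \to 1/q$ and reversing the coefficient order contributes a power of $q$ equal to $i \cdot (2i-1)$ (the exponent of the top term $(q^i z)^{2i-1}$ is $i(2i-1)$, wait — it is $(2i-1)i$ in the $q$-part plus $2i-1$ in the $z$-degree), and summing $\sum_{i=1}^n i(2i-1) = \sum_{i=1}^n (2i^2 - i) = 2\cdot\frac{n(n+1)(2n+1)}{6} - \frac{n(n+1)}{2} = \frac{n(n+1)(4n+2-3)}{6} = \frac{n(n+1)(4n-1)}{6}$, which is exactly the prefactor in \eqref{qrsym}. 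The total $z$-degree is $\sum_{i=1}^n (2i-1) = n^2$, matching $k \mapsto n^2 - k$.

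So the proof structure is: (1) prove the product formula for $\sum_k i_{B_q}(n,k) z^k$ by induction on $n$ using Theorem~\ref{qrl} (the inductive step just multiplies by the new factor $1 + q^n z + \cdots + (q^n z)^{2n-1}$); (2) in that product, perform the substitution $q \mapsto 1/q$ and the reversal $z \mapsto$ (reading coefficients backwards), factor by factor, to obtain the stated identity; (3) read off the equality of coefficients of $z^k$. The main obstacle is step (1): one must get the exponent of $q$ inside each factor correct, since the recurrence \eqref{rqr} introduces $q^{nj}$ at stage $n$ rather than $q^{j}$, so the naive product $\prod (1 + q^i z + \cdots)$ with exponent $i$ in the $q$-slot is plausibly off and the true factor may be $\sum_{j=0}^{2i-1} q^{i j} z^{j}$ only after a change of variable tracking $z$; I would first recompute the $n=2$ and $n=3$ rows of Table~2 against whatever product I write down, fix the formula to match, and only then run the induction. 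Once the factorization is pinned down, steps (2) and (3) are routine $q$-algebra and the constant $\frac{n(n+1)(4n-1)}{6}$ falls out of $\sum_{i=1}^n i(2i-1)$ as above.
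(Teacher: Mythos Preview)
Your proposal is correct, and in fact you briefly sketch the paper's own argument in your opening paragraph before pivoting away from it. The paper proceeds exactly via the bijection $\pi\mapsto\pi_0\pi$ you mention: it observes that this map sends $I_B(n,k)$ to $I_B(n,n^2-k)$, asserts that $\mathrm{inv}_i(\pi_0\pi)=(2i-1)-\mathrm{inv}_i(\pi)$ for each $i$ (so $\omega(\pi_0\pi)=\omega(\pi_0)-\omega(\pi)$), computes $\omega(\pi_0)=\sum_{i=1}^n(n+1-i)(2n-2i+1)=\tfrac{n(n+1)(4n-1)}{6}$, and reads off the identity.

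Your main route, by contrast, is algebraic: establish the factorization $\sum_k i_{B_q}(n,k)z^k=\prod_{i=1}^n\sum_{j=0}^{2i-1}(q^iz)^j$ by induction from Theorem~\ref{qrl}, then note that each factor satisfies $q^{i(2i-1)}\sum_{j}(q^{-i}z)^j=z^{2i-1}\sum_j(q^i z^{-1})^j$, so the full product under $q\mapsto 1/q$ becomes $q^{-\sum i(2i-1)}$ times the $z$-reversed product, and the exponent $\sum_{i=1}^n i(2i-1)=\tfrac{n(n+1)(4n-1)}{6}$ drops out. This is a genuinely different argument; in the paper the product formula is stated and proved \emph{after} the symmetry (as Theorem~\ref{gfqm}), whereas you are effectively deriving both at once. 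Your approach is cleaner in that it avoids verifying the coordinate-wise claim $\mathrm{inv}_i(\pi_0\pi)=(2i-1)-\mathrm{inv}_i(\pi)$, which the paper states without detailed justification; the cost is that you must first pin down the factorization, and your hedging about the exponent in the $i$-th factor is unnecessary --- the recurrence \eqref{rqr} does give exactly $\sum_{j=0}^{2n-1}(q^nz)^j$ as the new factor at stage $n$, so your displayed product is already correct.
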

\begin{proof}
Let $\pi_0\in B_n$ be the signed permutation which has the maximum number of inversions of type $B$ and then the largest value of the statistic $\omega$. Let $\mathcal{F}:T_1 \rightarrow T_2$  be a mapping where $T_1=\{\pi \in B_n:inv_B(\pi)=k\}$ and $T_2=\{\pi \in B_n:inv_B(\pi)=n^2-k\}$. Thus the mapping $\mathcal{F}:T_1 \rightarrow T_2$, $\mathcal{F}(\pi)=\pi_0\pi$, is bijective since $inv_B(\pi_0\pi)=inv_B(\pi_0)-inv_B(\pi)=n^2-k$ and then $\omega(\pi_0\pi)=\omega(\pi_0)-\omega(\pi)$ by \eqref{statt}.

\medskip
It is easy to see from \eqref{statt} that $\omega(\pi_0)=\sum_{i=1}^n (n+1-i)(2n-2i+1)$. Hence, if $\omega(\pi)=M$, then we have
$$
\omega(\pi_0\pi)=\omega(\pi_0)-M=\frac{n(n+1)(4n-1)}{6}-M.
$$
Therefore, by relation \eqref{eqan} we get the relation \eqref{qrsym}.
\end{proof}
By the recurrence relation \eqref{rqr} of Theorem \ref{qrl}, we  establish that the $q$-Mahonian numbers of type $B$ have the following generating function.
\begin{theorem}\label{gfqm}
The $q$-Mahonian numbers of type $B$ verify, for $n\geq1$, the following product:
\begin{equation}\label{gfq}
\sum_{k=0}^{n^2}i_{B_q}(n,k)z^k=\prod_{j=1}^n\left(1+q^jz+\cdots+(q^j z)^{2j-1}\right).
\end{equation}
\end{theorem}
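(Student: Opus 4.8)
The plan is to argue by induction on $n$, using the recurrence relation \eqref{rqr} of Theorem \ref{qrl} as the only input. This is the most direct route: the recurrence already encodes exactly how a new ``column'' is attached, and translating it into generating functions should immediately produce the extra factor $1+q^nz+\cdots+(q^nz)^{2n-1}$.

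For the base case $n=1$, I would simply note that $i_{B_q}(1,0)=1$ and $i_{B_q}(1,1)=q$, so $\sum_{k}i_{B_q}(1,k)z^k = 1+qz$, which agrees with $\prod_{j=1}^{1}\bigl(1+q^jz+\cdots+(q^jz)^{2j-1}\bigr) = 1+qz$ since $2\cdot 1-1=1$. For the inductive step, assume the identity holds for $n-1$. Starting from $\sum_{k}i_{B_q}(n,k)z^k=\sum_{k}\Bigl(\sum_{j=0}^{2n-1}q^{nj}\,i_{B_q}(n-1,k-j)\Bigr)z^k$, I would interchange the two (finite) sums and substitute $m=k-j$ to get
\[
\sum_{k}i_{B_q}(n,k)z^k=\sum_{j=0}^{2n-1}(q^n z)^j\sum_{m}i_{B_q}(n-1,m)z^m=\Bigl(\sum_{j=0}^{2n-1}(q^nz)^j\Bigr)\prod_{j=1}^{n-1}\bigl(1+q^jz+\cdots+(q^jz)^{2j-1}\bigr),
\]
where the last equality uses the induction hypothesis. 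Since $\sum_{j=0}^{2n-1}(q^nz)^j=1+q^nz+\cdots+(q^nz)^{2n-1}$, this is exactly $\prod_{j=1}^{n}\bigl(1+q^jz+\cdots+(q^jz)^{2j-1}\bigr)$, completing the induction.

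There is essentially no serious obstacle here; the one point deserving a remark is bookkeeping of the support of $k$. One should recall the convention $i_{B_q}(n,k)=0$ unless $0\le k\le n^2$, which lets all the sums over $k$ (or $m$) be taken over $\Z$ so that the interchange of summations and the index shift are unproblematic; and one should check that the degrees match, namely $(n-1)^2+(2n-1)=n^2$, so that the product on the right indeed has top degree $n^2$ as required by Definition \ref{dfq}. I would include these two observations as a short sentence rather than a separate lemma.
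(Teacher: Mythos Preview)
Your proof is correct and is essentially the same as the paper's: both start from the recurrence \eqref{rqr}, interchange the finite sums, shift the index $k\mapsto k-j$ using the convention $i_{B_q}(n-1,m)=0$ unless $0\le m\le (n-1)^2$, and then iterate/induct to obtain the product. The only cosmetic difference is that the paper writes the argument as an iteration of the functional identity $B_n(z;q)=B_{n-1}(z;q)\sum_{j=0}^{2n-1}(q^nz)^j$, whereas you phrase it as an explicit induction with a separate base case.
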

\begin{proof}
Let $B_n(z;q):=\sum_{k=0}^{n^2}i_{B_q}(n,k)z^k$ be the row generating function of the $q$-Mahonian numbers of type $B$. Then, it follows from  
\eqref{rqr} that  
\begin{align*}
B_n(z;q)&=\sum_{k=0}^{n^2} \sum_{j=0}^{2n-1}q^{nj}i_{B_q}(n-1,k-j)z^k \\
&=\sum_{j=0}^{2n-1}(q^n z)^j\sum_{k=0}^{n^2}i_{B_q}(n-1,k-j)z^{k-j}.
\end{align*}
Since $i_{B_q}(n-1,k-j)=0$ unless $0\leq k-j \leq (n-1)^2$, we have
\begin{align*}
B_n(z;q)&=\sum_{j=0}^{2n-1}(q^n z)^j\sum_{k=j}^{(n-1)^2+j}i_{B_q}(n-1,k-j)z^{k-j}\\
&=\sum_{j=0}^{2n-1}(q^n z)^j\sum_{k=0}^{(n-1)^2}i_{B_q}(n-1,k)z^{k}\\
&=B_{n-1}(z;q)\sum_{j=0}^{2n-1}(q^n z)^j.
\end{align*}
Iterating, we obtain
\begin{align*}
\sum_{k=0}^{n^2}i_{B_q}(n,k)z^k=\prod_{j=1}^n\left(1+q^jz+\cdots+(q^j z)^{2j-1}\right).
\end{align*}
\end{proof}
\subsection{Lattice path and partition/tiling interpretations}
We know from Theorems \ref{thib}, \ref{thib2} and \ref{thib3}, that there is a bijection between the signed permutations of $I_B(n,k)$, the paths of $\mathcal{P}^{B}_{n,k}$, the partitions of $\mathcal{P}r_{n,k}$ and the tilings of $\mathcal{T}^{B}_{n,k}$. This allows us to give in this subsection the path and partition/tiling interpretations of $q$-Mahonian numbers of type $B$.

\medskip

We start by the path interpretation of $i_{B_q}(n,k)$: for each path $P\in \mathcal{P}^{B}_{n,k}$, we denote by $\omega t(P)$ the weight associated to the path $P$ counting the number of boxes above $P$. For example, the weight of the path of Figure \ref{fg1} is $\omega t(P)=1\times 1+2\times 2+3\times 3=14$.

\begin{figure}[h!]

\begin{center}
\begin{tikzpicture}

\draw[step=0.8cm,color=black!30] (-0.81,0) grid (1.6,4.8);
\draw [line width=1pt](-0.8,0) -- (0,0) -- (0,0.8) -- (0.8,0.8) -- (0.8,2.4)--(1.6,2.4)--(1.6,3.2)--(1.6,4)--(1.6,4.8);

\fill[blue] (-0.8,0) circle(1.9pt) ;
\fill[blue] (1.6,4.8) circle(1.9pt) ;
\node[right=0.1pt] at (1.7,4.8){$v$};
\node at (-1.1,0){$u$};
\node at (-1.5,-0.5){$\text{Levels\ }$};
\node at (-0.8,-0.5){$\emptyset$};
\node at (0,-0.5){$1$};
\node at (0.8,-0.5){$2$};
\node at (1.6,-0.5){$3$};

\end{tikzpicture}
\end{center}


\caption{A path $P$ in $\mathcal{P}_{n,k}^I$ of the weight $\omega t(P)=14$.}
  \label{fg1}
\end{figure}
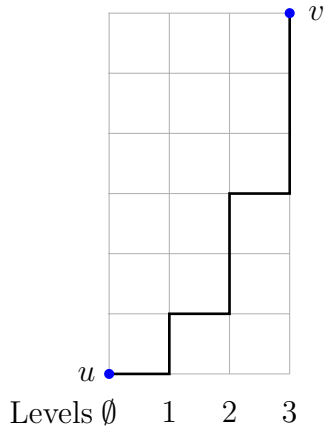
Then, we can interpret $i_{B_q}(n,k)$ by paths as follows.
\begin{theorem}\label{pit} For $n\geq 1$ and $0\leq k\leq n^2$, we have
\begin{equation}\label{qftt}
i_{B_q}(n,k)=\sum_{P\in \mathcal{P}^{B}_{n,k}}q^{\omega t(P)}.
\end{equation}    
\end{theorem}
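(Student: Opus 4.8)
The plan is to establish a weight-preserving bijection between the signed permutations counted by $i_{B_q}(n,k)$ and the lattice paths in $\mathcal{P}^{B}_{n,k}$ weighted by $q^{\omega t(P)}$, and then to verify that under this bijection the permutation statistic $\omega(\pi)$ matches the path weight $\omega t(P)$. The bijection itself is already in hand: Theorem \ref{thib} provides a bijection $\Phi\colon I_B(n,k)\to\mathcal{P}^{B}_{n,k}$ between signed permutations with $k$ inversions of type $B$ and lattice paths taking at most $2j-1$ North steps at level $j$. So the entire content of the proof is the identity $\omega(\pi)=\omega t(\Phi(\pi))$ for every $\pi\in I_B(n,k)$; once that is shown, summing $q^{\omega(\pi)}=q^{\omega t(\Phi(\pi))}$ over $\pi\in I_B(n,k)$ and re-indexing by $P=\Phi(\pi)$ gives \eqref{qftt} immediately from Definition \ref{dfq}.

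First I would unwind the definitions on both sides. On the permutation side, $\omega(\pi)=\sum_{i=1}^n(n-i+1)\,inv_i(\pi)$ by \eqref{statt}. The natural way to read this is by levels: if we re-index so that level $j$ (for $j=1,\ldots,n$, counted from the innermost position) carries the contribution $inv_i(\pi)$ with $i$ chosen so that $n-i+1=j$, then $\omega(\pi)=\sum_{j=1}^n j\cdot(\text{number of type-}B\text{ inversions recorded at level }j)$. On the path side, $\omega t(P)$ counts the boxes lying above $P$; grouping these boxes by the vertical line (level) they sit under, a path with $a_j$ North steps at level $j$ contributes exactly $\sum_{j=1}^n j\cdot a_j$ boxes, since each North step at level $j$ has precisely $j$ unit columns to its left. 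Thus both sides are weighted sums of the form $\sum_j j\cdot(\#\text{steps at level }j)$, and it remains to check that the number of type-$B$ inversions recorded at level $j$ in $\pi$ equals the number $a_j$ of North steps at level $j$ in $\Phi(\pi)$.

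That last point is precisely how the bijection $\Phi$ of Theorem \ref{thib} is built: in the inductive construction of the path from $\pi=\pi_{\tau,j}\in C_j$, the passage from the block of North steps at the outermost level to the rest of the path corresponds via \eqref{rib} to stripping off the value $\pi(n)=j$ and contributing $n-j$ (if $j>0$) or $n-j-1$ (if $j<0$) North steps at level $n$, which by \eqref{wrib} is exactly the term $n(n-j)$ or $n(n-j-1)$ added to $\omega(\tau)$ to produce $\omega(\pi)$. So the cleanest route is an induction on $n$: assume $\omega(\tau)=\omega t(\Phi(\tau))$ for the signed permutation $\tau\in P([n]\setminus\{|j|\})$ of length $n-1$ (identified with a permutation of $[n-1]$ in increasing-order relabeling, which does not change inversion counts), observe that $\Phi(\pi)$ is obtained from $\Phi(\tau)$ by prepending the appropriate number of North steps at level $n$ followed by one East step, and then compare: the new path has weight $\omega t(\Phi(\tau))+n\cdot(\text{number of new North steps})$, while $\omega(\pi)=\omega(\tau)+n(n-j)$ or $\omega(\tau)+n(n-j-1)$ by \eqref{wrib}, and the number of new North steps is exactly $n-j$ or $n-j-1$ by \eqref{rib} and the description of $\Phi$.

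The main obstacle is bookkeeping the level conventions consistently: the statistic \eqref{statt} weights position $i$ by $n-i+1$, whereas the path picture weights level $j$ by $j$, so one must pin down once and for all that ``level $j$ from the left of the path'' corresponds to ``the outermost $j$ positions'' and that the recursion peels from the outermost position — exactly the $\pi_{\tau,j}$ decomposition used in \eqref{rib} and \eqref{wrib}. Once that dictionary is fixed, the induction is routine, and indeed one could alternatively bypass the bijection entirely and prove \eqref{qftt} directly by induction on $n$, since the path-weight generating function $\sum_{P\in\mathcal{P}^{B}_{n,k}}q^{\omega t(P)}$ satisfies the same recurrence \eqref{rqr} as $i_{B_q}(n,k)$ — conditioning on the number $j\in\{0,\ldots,2n-1\}$ of North steps at level $n$ contributes a factor $q^{nj}$ and leaves a path in $\mathcal{P}^{B}_{n-1,k-j}$ — with the same initial values $i_{B_q}(1,0)=1$, $i_{B_q}(1,1)=q$; I would present the bijective argument as the main proof and remark on the recursive alternative.
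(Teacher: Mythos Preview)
Your proposal is correct and follows the same approach as the paper: use the bijection of Theorem~\ref{thib} and verify that it carries the permutation statistic $\omega(\pi)$ to the path weight $\omega t(P)$, both being $\sum_j j\cdot(\text{North steps at level }j)$. The paper's proof is a one-line assertion of this equality, whereas you supply the inductive justification via \eqref{rib}--\eqref{wrib} and note the alternative route through the recurrence \eqref{rqr}; one small slip is that in the inductive step the East step and the block of North steps at level $n$ are \emph{appended} to $\Phi(\tau)$, not prepended.
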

\begin{proof}
The weight $\omega t(P)$ can be seen as the sum of the product of the number
of vertical steps in each level with the number associated to this level. It is not difficult to see that form each $\pi \in I_B(n,k)$ the weight $\omega(\pi)$ equals to the number of cases above the path $P\in \mathcal{P}^{B}_{n,k}$ associated to $\pi$, i.e., $\omega t(P)=\omega(\pi)$.
\end{proof}

From the previous theorem, we can give the partition interpretation of the $q$-Mahonian
numbers of type $B$ as follows.
\begin{proposition}
For $n\geq 1$ and $0\leq k\leq n^2$, the $q$-Mahonian number of type $B$ is the generating function of the number of partitions into
$k$ parts in which each part $j$ must be used at most $2j-1$ times and the largest part $\leq n$, that is,
\begin{equation}
i_{B_q}(n,k)=\sum_{\lambda\in \mathcal{P}r_{n,k}}q^{\mid \lambda \mid}=\sum_{\lambda \sqsubset n^k}q^{\mid \lambda \mid},
\end{equation} 
where $\mid \lambda \mid =\sum_{j=1}^k \lambda_j$.
\end{proposition}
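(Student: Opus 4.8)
The plan is to derive the partition interpretation as a direct corollary of the path interpretation (Theorem \ref{pit}) together with the already-established bijection between $\mathcal{P}^{B}_{n,k}$ and $\mathcal{P}r_{n,k}$ used in Theorem \ref{thib2}. First I would recall that a North--East lattice path $P\in\mathcal{P}^{B}_{n,k}$ from $(0,0)$ to $(n,k)$ with at most $2j-1$ North steps at level $j$ corresponds, in the standard way, to the partition $\lambda(P)$ whose parts are read off from the columns: the number of North steps taken at level $j$ is exactly the multiplicity $m_j(\lambda)$ of the part $j$, so the constraint ``at most $2j-1$ North steps at level $j$'' translates precisely into ``the part $j$ is used at most $2j-1$ times'', while the total number of North steps being $k$ means $l(\lambda)=k$ and the largest level with a North step being $\leq n$ means $\lambda_1\leq n$. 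This is exactly the condition defining $\mathcal{P}r_{n,k}$, and the map $P\mapsto\lambda(P)$ is a bijection.

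Next I would identify the two weights under this bijection. The weight $\omega t(P)$ is, by its definition in the paragraph before Theorem \ref{pit}, the number of unit boxes lying above the path $P$, which equals $\sum_{j\geq 1} j\cdot(\text{number of North steps at level }j)=\sum_{j\geq 1} j\, m_j(\lambda(P))=\sum_{i=1}^{k}\lambda_i=|\lambda(P)|$. Hence the bijection $P\mapsto\lambda(P)$ is weight-preserving in the sense that $\omega t(P)=|\lambda(P)|$. Substituting this into the identity of Theorem \ref{pit} gives
\begin{equation*}
i_{B_q}(n,k)=\sum_{P\in\mathcal{P}^{B}_{n,k}}q^{\omega t(P)}=\sum_{\lambda\in\mathcal{P}r_{n,k}}q^{|\lambda|},
\end{equation*}
and since $\mathcal{P}r_{n,k}$ is precisely the set of partitions $\lambda$ with $l(\lambda)=k$, $\lambda_1\leq n$, and $m_j(\lambda)\leq 2j-1$ for all $j$ — which I would write as $\lambda\sqsubset n^{k}$ — this is the claimed formula with $|\lambda|=\sum_{j=1}^{k}\lambda_j$.

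There is essentially no hard step here; the only point requiring care is making the correspondence between ``North steps at level $j$'' and ``part $j$ with multiplicity $m_j$'' fully explicit and checking it against the running example (the path of Figure \ref{fgr} corresponds to $\lambda=(2,2,1)$ with $|\lambda|=5$ and weight $1\cdot 1+2\cdot 2=5$), so that the reader is convinced the weight $\omega t(P)$ coincides with $|\lambda|$ rather than with some other statistic. I would therefore keep the proof short: invoke Theorems \ref{thib2} and \ref{pit}, note that the bijection between paths and partitions already established in the proof of Theorem \ref{thib2} carries $\omega t(P)$ to $|\lambda|$ because both count the boxes above the path, and conclude. One could alternatively give an inductive proof directly from the recurrence \eqref{rqr}, matching $q^{nj}$ against appending $j$ copies of a new largest part $n$, but the bijective route is cleaner and is the one consistent with the surrounding exposition.
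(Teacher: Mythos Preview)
Your proposal is correct and follows essentially the same approach as the paper: invoke the path interpretation of Theorem \ref{pit}, use the path--partition bijection from Theorem \ref{thib2}, and observe that the box-count weight $\omega t(P)$ equals $|\lambda(P)|$. Your write-up is in fact more explicit than the paper's own proof, which simply notes that the boxes above and to the left of the path in each row form the parts of the partition and that the number of vertical steps at level $j$ gives the multiplicity of part $j$.
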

\begin{proof}
 The exponent of $q$ which is the weight of each path $P\in \mathcal{P}^{B}_{n,k}$ is given by counting
the number of boxes that fit above and to the left of the lattice path, such that
the number of boxes above and to the left of the path in each line represents a
part of the partition as shown in Figure \ref{ff2}. Or, in other words, the number of
vertical steps in level $j$ represents the number of the appearances of the part
$j$.   
\end{proof}

\begin{figure}[h!]
	
	\begin{center}
		\begin{tikzpicture}

		\draw[step=0.8cm,color=black!30] (-0.81,0) grid (1.6,2.4);
		\draw [line width=1pt](-0.8,0) -- (0,0) -- (0,0.8) -- (0.8,0.8) -- (0.8,2.4)--(1.6,2.4);
		
		\fill[blue] (-0.8,0) circle(1.9pt) ;
		\fill[blue] (1.6,2.4) circle(1.9pt) ;
		\node[right=0.1pt] at (1.7,2.4){$v$};
		\node at (-1.1,0){$u$};
		\node at (0.5,-0.5){$\lambda=(1,2,2)$};
	
	\fill[fill=gray!50,draw=black!50] (-0.75,0.05) rectangle (-0.05,0.75);	\fill[fill=gray!50,draw=black!50] (-0.75,0.85) rectangle (-0.05,1.55);
		\fill[fill=gray!50,draw=black!50] (0.05,0.85) rectangle (0.75,1.55);
			\fill[fill=gray!50,draw=black!50] (-0.75,1.65) rectangle (-0.05,2.35);
				\fill[fill=gray!50,draw=black!50] (0.05,1.65) rectangle (0.75,2.35);	
		\end{tikzpicture}
	\end{center}

\caption{The lattice path/partition associated to $q^{|(1,2,2)|}=q^{1+2+2}=q^5$.}
  \label{ff2}
\end{figure}
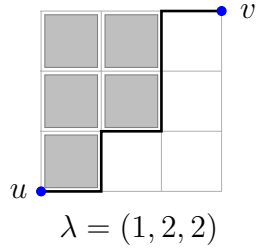
\medskip

For the tiling interpretation of $i_{B_q}(n,k)$, let me give the following notation: for each tiling $T\in \mathcal{T}^{B}_{n,k}$, let $\omega_T$ be the weight of $T$, which is the sum of all the
weights of orange squares in the tiling $T$, where the weight of an orange square is equal
to the number of green squares to the left of that orange square.
\begin{proposition}
For $n\geq 1$ and $0\leq k\leq n^2$, we have
\begin{equation*}
i_{B_q}(n,k)=\sum_{T\in \mathcal{T}^{B}_{n,k}}q^{\omega_T}.
\end{equation*}
\end{proposition}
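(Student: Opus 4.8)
The plan is to deduce this from the path interpretation of $i_{B_q}(n,k)$ given in Theorem \ref{pit}, together with the bijection between tilings and lattice paths used in Theorem \ref{thib3}, after checking that this bijection is \emph{weight-preserving}. Recall from the proof of Theorem \ref{thib3} that $\mathcal{T}^{B}_{n,k}$ is in bijection with $\mathcal{P}^{B}_{n,k}$, where, reading the $(n+k)\times1$-board from left to right, each green square is sent to an East step and each orange square to a North step. First I would verify that this bijection carries the statistic $\omega_T$ to the statistic $\omega t(P)$.

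To see the weight-preservation, fix a tiling $T\in\mathcal{T}^{B}_{n,k}$ and let $P$ be the corresponding path. Consider an orange square of $T$ having exactly $j$ green squares to its left. Under the correspondence, this orange square becomes a North step preceded by exactly $j$ East steps, that is, a North step taken at level $j$. By the definition of $\omega t(P)$ — the number of boxes lying above $P$, equivalently the sum over the North steps of the level at which each is taken — this North step contributes exactly $j$ to $\omega t(P)$; and by the definition of $\omega_T$ — the sum over the orange squares of the number of green squares to the left of each — this orange square contributes exactly $j$ to $\omega_T$. Summing over all orange squares, respectively over all North steps, gives $\omega_T=\omega t(P)$.

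Combining this weight-preserving bijection with Theorem \ref{pit} yields
$$\sum_{T\in\mathcal{T}^{B}_{n,k}}q^{\omega_T}=\sum_{P\in\mathcal{P}^{B}_{n,k}}q^{\omega t(P)}=i_{B_q}(n,k),$$
which is the claimed identity. I do not expect a genuine obstacle here: the only point to check is the bookkeeping that ``$j$ green squares to the left of an orange square'' on the tiling side is exactly ``a North step at level $j$'' on the path side, which is immediate from the correspondence green $\leftrightarrow$ East, orange $\leftrightarrow$ North already established in Theorem \ref{thib3}. The admissibility conditions also match, since ``at most $2j-1$ consecutive orange squares following $j$ green squares'' corresponds precisely to ``at most $2j-1$ North steps at level $j$''.
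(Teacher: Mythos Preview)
Your proposal is correct and follows essentially the same argument as the paper: both use the bijection between $\mathcal{T}^{B}_{n,k}$ and $\mathcal{P}^{B}_{n,k}$ from Theorem~\ref{thib3}, verify that it is weight-preserving by matching ``$j$ green squares to the left of an orange square'' with ``a North step at level $j$ contributing $j$ boxes above the path'', and then invoke Theorem~\ref{pit}.
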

\begin{proof}
Since the bijection between lattice paths and tiling is weight-preserving. Each green square in the tiling represents an East step of the path, and
each orange square represents a North step, (see Figure \ref{tqq}). It remains to show that
the weight of the tiling and its associated lattice path are the same. In fact,
calculating the weight of the path is the sum of the number of boxes above and
to the left of the path in each row, each North step gives us a row. Calculating
the number of boxes in each row refers to calculate the number of East step
before the North step associated to this row, which is exactly how to calculate
the weight of a tiling.    
\end{proof}
For example, the weight of the tiling 'gogoog' is $q^{1+2+2}=q^5$, as showed in Figure \ref{tqq}.
\begin{figure}[h!]	
	
	\begin{center}
		\begin{tikzpicture}
		
		\draw[step=0.8cm,color=black!30] (0,0) grid(4.8,0.8);
		\fill[fill=green!60,draw=black!50] (0.05,0.05) rectangle (0.75,0.75);
		\fill[fill=orange!90,draw=black!50] (0.85,0.05) rectangle (1.55,0.75);
		\fill[fill=green!60,draw=black!50] (1.65,0.05) rectangle (2.35,0.75);
		\fill[fill=orange!90,draw=black!50] (2.45,0.05) rectangle (3.15,0.75);
		\fill[fill=orange!90,draw=black!50] (3.25,0.05) rectangle (3.95,0.75);
		\fill[fill=green!60,draw=black!50] (4.05,0.05) rectangle (4.75,0.75);
\node[rectangle] at (5.3, 0.4){$q^5$};
		
		\end{tikzpicture}\\
		\vspace{0.1cm}
		\begin{tikzpicture}

		\draw[step=0.8cm,color=black!30] (-0.81,0) grid (1.6,2.4);
		\draw [line width=1pt](-0.8,0) -- (0,0) -- (0,0.8) -- (0.8,0.8) -- (0.8,2.4)--(1.6,2.4);
		
		\fill[blue] (-0.8,0) circle(1.9pt) ;
		\fill[blue] (1.6,2.4) circle(1.9pt) ;
		\node[right=0.1pt] at (1.7,2.4){$v$};
		\node at (-1.1,0){$u$};
		\node at (0.6, -0.5){$q^5$};
		\fill[fill=gray!50,draw=black!50] (-0.75,0.05) rectangle (-0.05,0.75);	\fill[fill=gray!50,draw=black!50] (-0.75,0.85) rectangle (-0.05,1.55);
		\fill[fill=gray!50,draw=black!50] (0.05,0.85) rectangle (0.75,1.55);
		\fill[fill=gray!50,draw=black!50] (-0.75,1.65) rectangle (-0.05,2.35);
		\fill[fill=gray!50,draw=black!50] (0.05,1.65) rectangle (0.75,2.35);

		\end{tikzpicture}
		
	\end{center}
  \caption{Tiling and  lattice path interpretation of $q^5$.}\label{tqq}
\end{figure}

\section{$q$-Log-concavity and log-concavity  properties}\label{qlg}
A sequence of nonnegative numbers $\left(x_{k}\right) _{k}$ is log-concave if $x_{i-1}x_{i+1}\leq x_{i}^{2} \,\text{ for all }i>0$, which is equivalent to $x_{i-1}x_{j+1}\leq x_{i}x_{j}$, for $j\geq i\geq 1$.

\medskip

A finite sequence of real numbers $\left( a_{0},\ldots,a_{m}\right)$ is said to be unimodal if there exists an index $0\leq m^{\ast }\leq m$, such that $a_{0}\leq a_{1}\leq \cdots \leq  a_{m^{\ast }}\geq a_{m^{\ast }+1}\geq \cdots \geq a_{m}$,
the index $m^*$ is called the mode of the sequence.

\medskip

If a sequence of positive real numbers is log-concave then it is unimodal \cite{FB}. Diverse tools and techniques have been developed trying to prove the log-concavity and unimodality properties in different approaches, for more details see \cite{FB1,ST}.

\medskip

Let $ \left( A_{n}(q)\right)_{n}$ be a sequence of polynomials where $q$ is an indeterminate. If, for each $n\geq 1$,  $A_{n}(q)^{2}-A_{n-1}(q)A_{n+1}(q)$ has nonnegative coefficients as polynomial in $q$ (for short, $A_{n}(q)^{2}-A_{n-1}(q)A_{n+1}(q)\geq_q 0$), we say that $\left(A_{n}(q)\right)_{n}$ is $q$-log-concave, and it is strongly $q$-log-concave if $A_{m}(q)A_{n}(q)-A_{m-1}(q)A_{n+1}(q)\geq _{q}0$, for all $n\geq m\geq 1$. For more details, see Stanley \cite{ST}.
	
\bigskip
	
The strong $q$-log-concavity implies the $q$-log-concavity. But the converse is not true in general (see Sagan \cite{Sg}).
       Furthermore, the $q$-log-concavity implies the log concavity and therefore the unimodality for each fixed positive number $q$.
\medskip
	
The first result dealing with the $q$-log-concavity property is due to Butler \cite{BUT} who proved the strong $q$-log-concavity of $q$-binomial coefficients using partitions. Sagan \cite{Sg1} proved the same property for these coefficients using paths. Bazeniar et al. \cite{BZB} studied the same property for the $q$-bi$^s$nomial coefficients by a similar approach to that of Sagan. Dousse and Kim \cite{DK2} generalized this property for the over analogues of $q$-binomial coefficients by a similar approach to that of Butler. Recently, Ghemit and Ahmia in \cite{GA1,GA2} established this property for the over analogues of $q$-bi$^s$nomial coefficients and the $q$-analogue of Mahonian numbers, using respectively paths/partitions and the injection property over the set of permutations.

\medskip
Motivated by these works, we prove in this section that the $q$-Mahonian numbers of type $B$ form a strongly $q$-log-concave sequence of polynomials in $k$, by using the path approach due to Sagan \cite{Sg1}, which implies that the Mahonian numbers of type $B$ form a log-concave sequence in $k$, and therefore unimodal.
\bigskip

\medskip

According to Theorem \ref{pit}, we have 

\begin{equation}
i_{B_q}(n,k)=\sum_{P\in \mathcal{P}^{B}_{n,k}}q^{\omega t(P)}
\end{equation} 
and the weight $wt(P)$ equals to the number of cases above the path $P$ $\in \mathcal{P}^{B}_{n,k}$ with $P$ starts from $u_{1}=(0,0)$ to $v_{1}=(n,k)$, then
\begin{equation}\label{RRGH}
(i_{B_q}(n,k))^2=\sum_{P_{1}\in \mathcal{P}^{B}_{n,k}}q^{\omega t(P_{1})}\sum_{P_{2}\in \mathcal{P}^{B}_{n,k}}q^{\omega t(P_{2})},
\end{equation} 
where $P_{1}$ starts from $u_{1}=(0,0)$ to $v_{1}=(n,k)$ and $P_{2}$ starts from $u_{2}=(0,1)$ to $v_{1}=(n,k+1)$.
The previous equation is equivalent to 

\begin{equation}
(i_{B_q}(n,k))^2=\sum_{P_{1},P_{2}\in \mathcal{P}^{B}_{n,k}}q^{\omega t(P_{1}P_{2})},
\end{equation} 
where $\omega t(P_{1}P_{2})=\omega t(P_{1})+\omega t(P_{2})$.

\medskip
Let $P_1,P_2\in P_{n,k}^B$. And let $u \overset{{P}}{\longrightarrow} v$ denote  $P$ has initial vertex $u$ and final vertex $v$.

\medskip

Using similar technic of Sagan in \cite{Sg1} to prove the strong $q$-log-concavity of the $q$-Mahonian numbers of type $B$, we recall then the following definition used by him.
\begin{definition}\label{dfinv}
	Given $u_1 \overset{{P_1}}{\longrightarrow} v_1$ and $u_2 \overset{{P_2}}{\longrightarrow} v_2$. Then define the involution $\varphi_I(P_1,P_2)=(P^{'}_{1},P^{'}_{2})$
	where
	\begin{enumerate}
\item[] $$P^{'}_{1}=u_1 \overset{{P_1}}{\longrightarrow} v_0 \overset{{P_2}}{\longrightarrow} v_2 \text{\ and\ }P^{'}_{2}= u_2 \overset{{P_2}}{\longrightarrow} v_0 \overset{{P_1}}{\longrightarrow} v_1,$$
i.e., switches the portions of
		$P_1$ and $P_2$ after $v_0$ (see, Figure \ref{Fig2}), where $v_0$ is the last vertex of $P_1 \cap P_2$ such that the number of vertical steps,
in the vertical level of $v_0$, of $P_1$ and $P_2$ all together does not exceed $2m-1$ if the vertical level of $v_0$ is $m$.	
	\end{enumerate}
	
\end{definition}
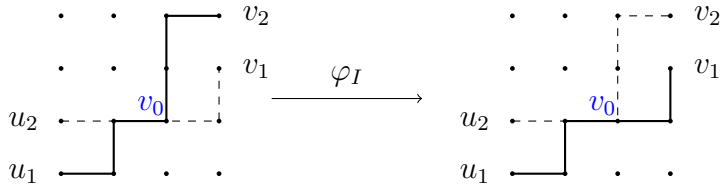
\begin{figure}[h!]
		\begin{center}
			
			\begin{tikzpicture}
			
			\draw[dashed]  (-0.7,0.7)--(0,0.7)--(0.7,0.7)--(1.4,0.7)--(1.4,1.4);
			\draw [line width=0.8pt] (-0.7,0)--(0,0)--(0,0.7)--(0.7,0.7)--(0.7,1.4)--(0.7,2.1)--(1.4,2.1);
			
			\fill[] (-0.7,0) circle (0.9pt);
			\fill[] (-0.7,0.7) circle (0.9pt);
			\fill[] (-0.7,1.4) circle (0.9pt);
			\fill[] (-0.7,2.1) circle (0.9pt);

			\fill[] (0,0) circle (0.9pt);
			\fill[] (0,0.7) circle (0.9pt);
			\fill[] (0,1.4) circle (0.9pt);
			\fill[] (0,2.1) circle (0.9pt);

			\fill[] (0.7,0) circle (0.9pt);
			\fill[] (0.7,0.7) circle (0.9pt);
			\fill[] (0.7,1.4) circle (0.9pt);
			\fill[] (0.7,2.1) circle (0.9pt);

			\fill[] (1.4,0) circle (0.9pt);
			\fill[] (1.4,0.7) circle (0.9pt);
			\fill[] (1.4,1.4) circle (0.9pt);
			\fill[] (1.4,2.1) circle (0.9pt);


			\node at (-1.2,0){$u_1$};
			\node at (-1.2,0.7){$u_2$};
			\node at (1.9,1.4){$v_1$};
			\node at (1.9,2.1){$v_2$};
			
			\node at (0.5,0.9){$\textcolor[rgb]{0.00,0.00,1.00}{v_0}$};

			\begin{scope}[xshift=1.9cm]
			\draw [->] (0.2,1)--(2.2,1);
			\node[rectangle] at (1.2,1.3) {$\varphi_I$};
			
			\begin{scope}[xshift=4.1cm]
			\draw[dashed]    (-0.7,0.7)--(0,0.7)--(0.7,0.7)--(0.7,1.4)--(0.7,1.4)--(0.7,2.1)--(1.4,2.1);
			\draw [line width=0.8pt] (-0.7,0)--(0,0)--(0,0.7)--(0.7,0.7)--(1.4,0.7)--(1.4,1.4);
			
			\fill[] (-0.7,0) circle (0.9pt);
			\fill[] (-0.7,0.7) circle (0.9pt);
			\fill[] (-0.7,1.4) circle (0.9pt);
			\fill[] (-0.7,2.1) circle (0.9pt);

			\fill[] (0,0) circle (0.9pt);
			\fill[] (0,0.7) circle (0.9pt);
			\fill[] (0,1.4) circle (0.9pt);
			\fill[] (0,2.1) circle (0.9pt);

			\fill[] (0.7,0) circle (0.9pt);
			\fill[] (0.7,0.7) circle (0.9pt);
			\fill[] (0.7,1.4) circle (0.9pt);
			\fill[] (0.7,2.1) circle (0.9pt);

			\fill[] (1.4,0) circle (0.9pt);
			\fill[] (1.4,0.7) circle (0.9pt);
			\fill[] (1.4,1.4) circle (0.9pt);
			\fill[] (1.4,2.1) circle (0.9pt);


			\node at (-1.2,-0){$u_1$};
			\node at (-1.2,0.7){$u_2$};

		    \node at (1.9,1.4){$v_1$};
			\node at (1.9,2.1){$v_2$};

			\node at (0.5,0.9){$\textcolor[rgb]{0.00,0.00,1.00}{v_0}$};

			\end{scope}
			\end{scope}
			\end{tikzpicture}	
		\end{center}
		
		\caption{The involution $\varphi_{I}$.}\label{Fig2}
	\end{figure}

Now, we give the following theorem.
\begin{theorem}\label{tmr}
For $n$ fixed, the sequence of polynomials $\left( i_{B_q}(n,k)\right)_{0\leq k \leq n^2}$ is strongly q-log concave in $k$, that is,
\[i_{B_q}(n,l)\times i_{B_q}(n,k) -i_{B_q}(n,l-1)\times i_{B_q}(n,k+1)\geq_{q}0,\]
where $0<l\leq k$.
\end{theorem}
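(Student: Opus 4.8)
The plan is to adapt Sagan's path-switching argument for $q$-log-concavity directly to the lattice path model $\mathcal{P}^B_{n,k}$ provided by Theorem~\ref{pit}. Fix $n$ and $0 < l \le k$. By Theorem~\ref{pit}, the product $i_{B_q}(n,l)\, i_{B_q}(n,k)$ is the generating function $\sum q^{\omega t(P_1) + \omega t(P_2)}$ over all pairs $(P_1, P_2)$ where $P_1$ runs over paths from $(0,0)$ to $(n,l)$ and $P_2$ over paths from $(0,0)$ to $(n,k)$; to make the starting points distinct I would shift $P_1$ to start at $u_1=(0,0)$ with endpoint $v_1=(n,l)$ and $P_2$ to start at $u_2=(0,-1)$ with endpoint $v_2=(n,k)$ — or, following the excerpt's convention, shift the other way so that the two lattice path families live on offset horizontal lines but have the same shape-set. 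Similarly $i_{B_q}(n,l-1)\, i_{B_q}(n,k+1)$ is the generating function over pairs $(P'_1, P'_2)$ with $P'_1: (0,0)\to(n,l-1)$ shifted to start at $u_2$, and $P'_2: (0,0)\to(n,k+1)$ shifted to start at $u_1$. The goal is to produce a weight-preserving injection from the second family of pairs into the first.

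The key step is to apply the involution $\varphi_I$ of Definition~\ref{dfinv} to a pair $(P'_1, P'_2)$ with $u_2 \overset{P'_1}{\longrightarrow} v'_1$ and $u_1 \overset{P'_2}{\longrightarrow} v'_2$ where $v'_1 = (n, l-1)$ and $v'_2 = (n, k+1)$. Because $u_1$ lies strictly below $u_2$ and $v'_1$ lies strictly below $v'_2$ while $k+1 > l-1$, the two paths $P'_1$ and $P'_2$ must cross, so the set $P'_1 \cap P'_2$ of common lattice points is nonempty; one takes $v_0$ to be the last such common vertex subject to the additional constraint in Definition~\ref{dfinv} that the combined number of North steps of $P'_1$ and $P'_2$ at the vertical level of $v_0$ does not exceed $2m-1$ when that level is $m$. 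Switching tails at $v_0$ produces $(P''_1, P''_2)$ with $P''_1: u_1 \to v'_2 = (n,k+1)$... wait — here is where one must be careful with the endpoints: switching tails sends $u_1 \to v'_1$ and $u_2 \to v'_2$, i.e. into the first family $(P_1, P_2)$ with endpoints $(n,l)$ and $(n,k)$ after accounting for the vertical shift. Since $\varphi_I$ is an involution, it is in particular injective, and since it only rearranges steps it is weight-preserving: $\omega t(P''_1) + \omega t(P''_2) = \omega t(P'_1) + \omega t(P'_2)$, because the number of boxes above each North step depends only on that step's horizontal position, which is unchanged by the switch. This yields the coefficientwise inequality, which is exactly strong $q$-log-concavity in $k$.

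The main obstacle — and the point requiring genuine care rather than routine checking — is verifying that $\varphi_I$ really does land inside $\mathcal{P}^B_{n,k}$, i.e. that the switched paths $P''_1, P''_2$ each individually still respect the constraint ``at most $2m-1$ North steps at level $m$''. This is precisely why $v_0$ is chosen not merely as the last common vertex but as the last common vertex past which the pooled vertical-step count at its level stays $\le 2m-1$: this choice guarantees that when the tails are exchanged, neither new path can acquire more than $2m-1$ North steps at any level. I would need to argue that such a $v_0$ always exists when the paths cross (this uses that both $P'_1$ and $P'_2$ individually satisfy the bound, so at any shared level their pooled North-step deficit leaves room, together with a discrete intermediate-value argument on the crossing), and that after switching the resulting endpoints are the correct ones $(n,l)$ and $(n,k)$. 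Once that combinatorial lemma about $v_0$ is in hand, the weight-preservation and injectivity are immediate from the structure of $\varphi_I$, and the theorem follows; the corollaries on ordinary log-concavity and unimodality of $(i_B(n,k))_{0\le k\le n^2}$ then come by specializing $q=1$.
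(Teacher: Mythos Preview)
Your proposal is correct and follows essentially the same route as the paper: both use Sagan's tail-switching involution $\varphi_I$ on pairs of paths in the model $\mathcal{P}^{B}_{n,k}$ from Theorem~\ref{pit}, with the second path shifted vertically, and both rely on weight-preservation under the switch. Your explicit flagging of the validity issue (that the switched paths must still obey the ``at most $2m-1$ North steps at level $m$'' constraint) is exactly what the extra clause in Definition~\ref{dfinv} on the choice of $v_0$ is meant to handle, and the paper gives no more detail on this point than you do; you should, however, clean up the endpoint bookkeeping in your setup (the ``wait'' passage), which in the paper is done simply by taking $u_1=(0,0)$, $u_2=(0,k-l+1)$ for the general case.
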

\begin{proof}
First, we will show that
\begin{equation}\label{inequ1}
\left( i_{B_q}(n,k)\right)^2-i_{B_q}(n,k-1)\times i_{B_q}(n,k+1)\geq_{q}0.
\end{equation}
By relation \eqref{RRGH}, the left-hand side of the inequality \eqref{inequ1} can be written as follows
\begin{equation}\label{equpath}
\left( i_{B_q}(n,k)\right)^2- i_{B_q}(n,k-1)\times i_{B_q}(n,k+1)=\sum_{P_1 , P_2}(-1)^{P_1 P_2}q^{\omega t(P_1P_2)},
\end{equation}
where the sum is over all pairs $P_1, P_2$ such that $P_1$ starts at $u_1 =(0,0)$, $P_2$ starts at $u_2 =(0,1)$, and

$$(-1)^{P_1 P_2}=\left\{\begin{array}{cccc}
+1 & \text{\ if\ } u_1 \overset{{P_1}}{\longrightarrow} v_1 \hspace{0.3cm} \text{and} \hspace{0.3cm}u_2 \overset{{P_2}}{\longrightarrow} v_2,\\
-1 & \text{\ if\ }u_1 \overset{{P_1}}{\longrightarrow} v_2 \hspace{0.3cm} \text{and} \hspace{0.3cm}u_2 \overset{{P_2}}{\longrightarrow} v_1,\\\end{array}\right.$$
with $v_1 =(n,k)$ and $v_2=(n,k+1)$.

\medskip

On each path pair $(P_1,P_2)$  with sign $-1$ in equation (\ref{equpath}), we apply the involution $\varphi_I$ given in Definition \ref{dfinv}, we obtain a path pair $(P'_1, P'_2)$ with sign $+1$, and since $P_1$ and $P_2$ start on the same vertical line, the sum of the number of boxes above $P_1^{'}$ and the
number of boxes above $P'_2$ after switching remains the same i.e., $\omega t(P_1 P_2)= \omega t(P'_1P'_2)$ (see,  Figure \ref{Fig2}), because the boxes lost by the first are gained by the second. Thus the inequality (\ref{inequ1}) is verified.

\medskip

To prove the general case 
\begin{equation}
 i_{B_q}(n,l)\times i_{B_q}(n,k)-i_{B_q}(n,l-1)\times i_{B_q}(n,k+1)\geq_{q}0,
\end{equation}
where $ 0< l\leq k$, it suffices to use the same approach with $u_2=(0,k-l+1)$ as new condition.

\end{proof}
As an illustration, we give the following example.
\begin{example}
For $n=2$ and $k=2$, we have
\medskip
\begin{align*}
&(i_{B_q}(2,2))^2=(q^4+q^3)^2=q^8+2q^7+q^6,\\
&i_{B_q}(2,1).i_{B_q}(2,3)=(q^2+q).(q^6+q^5)=q^8+2q^7+q^6.
\end{align*}
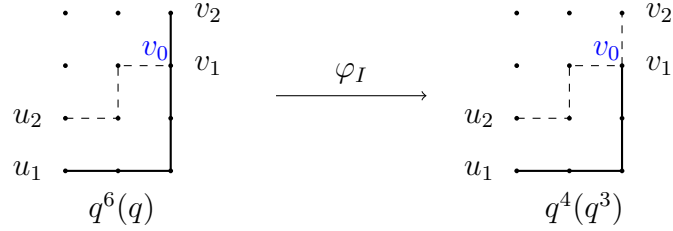
\begin{figure}[h!]
		\begin{center}
			
			\begin{tikzpicture}
			
			\draw[dashed]  (-0.7,0.7)--(0,0.7)--(0,1.4)--(0.7,1.4);
			\draw [line width=0.8pt] (-0.7,0)--(0,0)--(0.7,0)--(0.7,0.7)--(0.7,1.4)--(0.7,2.1);
			
			\fill[] (-0.7,0) circle (0.9pt);
			\fill[] (-0.7,0.7) circle (0.9pt);
			\fill[] (-0.7,1.4) circle (0.9pt);
			\fill[] (-0.7,2.1) circle (0.9pt);

			\fill[] (0,0) circle (0.9pt);
			\fill[] (0,0.7) circle (0.9pt);
			\fill[] (0,1.4) circle (0.9pt);
			\fill[] (0,2.1) circle (0.9pt);

			\fill[] (0.7,0) circle (0.9pt);
			\fill[] (0.7,0.7) circle (0.9pt);
			\fill[] (0.7,1.4) circle (0.9pt);
			\fill[] (0.7,2.1) circle (0.9pt);
			\node at (-1.2,0){$u_1$};
   	\node at (0.05,-0.5){$q^6(q)$};
			\node at (-1.2,0.7){$u_2$};
			\node at (1.2,1.4){$v_1$};
			\node at (1.2,2.1){$v_2$};
			
			\node at (0.5,1.6){$\textcolor[rgb]{0.00,0.00,1.00}{v_0}$};

			\begin{scope}[xshift=1.9cm]
			\draw [->] (0.2,1)--(2.2,1);
			\node[rectangle] at (1.2,1.3) {$\varphi_I$};
			
			\begin{scope}[xshift=4.1cm]
			\draw[dashed]  (-0.7,0.7)--(0,0.7)--(0,1.4)--(0.7,1.4)--(0.7,2.1);
			\draw [line width=0.8pt] (-0.7,0)--(0,0)--(0.7,0)--(0.7,0.7)--(0.7,1.4);
			
			\fill[] (-0.7,0) circle (0.9pt);
			\fill[] (-0.7,0.7) circle (0.9pt);
			\fill[] (-0.7,1.4) circle (0.9pt);
			\fill[] (-0.7,2.1) circle (0.9pt);

			\fill[] (0,0) circle (0.9pt);
			\fill[] (0,0.7) circle (0.9pt);
			\fill[] (0,1.4) circle (0.9pt);
			\fill[] (0,2.1) circle (0.9pt);

			\fill[] (0.7,0) circle (0.9pt);
			\fill[] (0.7,0.7) circle (0.9pt);
			\fill[] (0.7,1.4) circle (0.9pt);
			\fill[] (0.7,2.1) circle (0.9pt);
			
			\node at (-1.2,0){$u_1$};
   \node at (0.2,-0.5){$q^4(q^3)$};
			\node at (-1.2,0.7){$u_2$};
			\node at (1.2,1.4){$v_1$};
			\node at (1.2,2.1){$v_2$};
			
			\node at (0.5,1.6){$\textcolor[rgb]{0.00,0.00,1.00}{v_0}$};

			\end{scope}
			\end{scope}
			\end{tikzpicture}	
		\end{center}
		
		\caption{The application of the involution $\varphi_{I}$.}\label{Fig3}
	\end{figure}


\medskip	

Applying $\varphi_I$ on the paths $P_1$ ($u_1 \overset{{P_1}}{\longrightarrow} v_2$) and $P_2$ ($u_2 \overset{{P_2}}{\longrightarrow} v_1$), as shown in Figure \ref{Fig3}, associated to the term  of  $i_{B_q}(2,3)\times i_{B_q}(2,1)$ : $q^6$ and $q$ respectively, we obtain the paths  $P'_1$ ($u_1 \overset{{P'_1}}{\longrightarrow} v_1$) and $P'_2$ ($u_2 \overset{{P'_2}}{\longrightarrow} v_2$) associated to the term  of $(i_{B_q}(2,2))^2$: $q^4$ and $q^3$ respectively.
\end{example}

By setting $q=1$ in Theorem \ref{tmr}, we can establish that the Mahonian numbers of type $B$ satisfy the following results about the log-concavity and unimodality.
\begin{corollary}
The Mahonian numbers $(i_B(n,k))_{0\leq k \leq n^2}$ of type $B$ form a log-concave sequence in $k$, and therefore unimodal.   
\end{corollary}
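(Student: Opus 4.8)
The plan is to prove the strong $q$-log-concavity of $\left(i_{B_q}(n,k)\right)_{0\le k\le n^2}$ in $k$ via the lattice-path/involution approach of Sagan, then specialize $q=1$. First I would use Theorem \ref{pit} to write each $i_{B_q}(n,k)$ as a weighted sum $\sum_{P\in\mathcal{P}^B_{n,k}}q^{\omega t(P)}$ over the admissible North-East lattice paths, where $\omega t(P)$ counts boxes above $P$. The crucial structural point is that the admissibility constraint ``at most $2j-1$ North steps at level $j$'' is a \emph{local} condition, level by level, so it interacts well with a path-swapping argument.

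The key step is to expand the difference $i_{B_q}(n,k)\,i_{B_q}(n,l)-i_{B_q}(n,k+1)\,i_{B_q}(n,l-1)$ (for $0<l\le k$) as a signed sum over ordered pairs $(P_1,P_2)$ of paths, where $P_1$ starts at $u_1=(0,0)$, $P_2$ starts at $u_2=(0,k-l+1)$, and the sign is $+1$ when $P_1$ ends at $v_1=(n,l)$, $P_2$ ends at $v_2=(n,k+1)$, wait — more precisely one sets up the endpoints so that the ``$+$'' configuration corresponds to the product $i_{B_q}(n,l)\,i_{B_q}(n,k)$ and the ``$-$'' configuration to $i_{B_q}(n,l-1)\,i_{B_q}(n,k+1)$; since $P_1$ and $P_2$ start on the \emph{same vertical line}, their combined weight $\omega t(P_1)+\omega t(P_2)$ is preserved by any swap of tails that keeps them on that line. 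I would then invoke the involution $\varphi_I$ of Definition \ref{dfinv}: on a ``$-$'' pair, locate the last common vertex $v_0$ of $P_1\cap P_2$ at which the total number of North steps of $P_1$ and $P_2$ together in that vertical level does not exceed $2m-1$ (where $m$ is the level of $v_0$), and switch the tails after $v_0$. This produces a ``$+$'' pair with the same $q$-weight, and $\varphi_I$ is an involution, so the ``$-$'' terms cancel against a subset of the ``$+$'' terms, leaving a polynomial with nonnegative coefficients.

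The main obstacle — and the point I would treat most carefully — is verifying that $\varphi_I$ is well-defined, namely (i) that a suitable $v_0$ always exists for a ``$-$'' pair, and (ii) that the swapped paths $P_1',P_2'$ are again \emph{admissible}, i.e. still obey the ``$\le 2j-1$ North steps per level'' rule. Existence of $v_0$ follows because $P_1$ and $P_2$ start one on top of the other (after translating, $u_2$ sits above $u_1$), so they must cross, and among the crossing vertices the ``level-budget'' condition picks out the last legal one; one must check that a crossing exists precisely in the ``$-$'' case (endpoints interleaved) but not forced in the ``$+$'' case, which is what makes $\varphi_I$ a partial involution pairing all ``$-$'' pairs with distinct ``$+$'' pairs. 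For admissibility after swapping: within the level of $v_0$ the combined North-step count of the two paths is unchanged (we only redistribute tails), and the defining inequality $\le 2m-1$ on the \emph{combined} count is exactly what guarantees each of $P_1',P_2'$ individually stays within its own budget $2m-1$ at that level; at all other levels nothing changes. I would spell this out level-by-level, mirroring Sagan's original verification for $q$-binomial coefficients, with the only new ingredient being the $2j-1$ cap in place of a constant cap.

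Once Theorem \ref{tmr} is in hand, the corollary is immediate: setting $q=1$, the inequality $i_{B}(n,l)\,i_{B}(n,k)\ge i_{B}(n,l-1)\,i_{B}(n,k+1)$ holds for all $0<l\le k\le n^2$, which is precisely strong log-concavity of $(i_B(n,k))_{0\le k\le n^2}$; strong log-concavity implies log-concavity, and a log-concave sequence of positive reals is unimodal (citing \cite{FB}). It is worth remarking that positivity of the entries $i_B(n,k)$ on the full range $0\le k\le n^2$ is clear from the product generating function, and the internal-zero issue does not arise, so unimodality follows cleanly.
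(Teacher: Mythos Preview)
Your proposal is correct and follows essentially the same route as the paper: the corollary is deduced from Theorem~\ref{tmr} by setting $q=1$, and Theorem~\ref{tmr} itself is proved via Sagan's path-swapping involution $\varphi_I$ of Definition~\ref{dfinv} applied to pairs of paths in $\mathcal{P}^B_{n,k}$. Your write-up is in fact more careful than the paper's on the well-definedness of $\varphi_I$ (existence of $v_0$ and admissibility of the swapped paths), but the underlying argument is the same.
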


\medskip

\section{Concluding remark and questions} \label{QR}
In the previous section, we proved that the $q$-analogue of Mahonian numbers of type $B$ form a strongly $q$-log-concave sequence of polynomials in $k$. But in $n$, we have not been able to prove it by the same approach, we found that we can not apply the involution of Definition \ref{dfinv} in the proof, because the number of the vertical steps after the switching in the level $m$ of $v_0$ exceed $2m-1$,  which contradicts the conditions of Definition \ref{dfinv}. Numerically, the sequence (resp. the sequence of polynomials) $\left( i_{B}(n,k)\right)_{n}$ (resp.  $\left( i_{B_q}(n,k)\right)_{n}$) is log-concave (resp. strongly $q$-log concave) in $n$, so we propose the following  question. 

\medskip
 
\noindent{\bf Question 1.}
For $k$ fixed, is the sequence (resp. the sequence of polynomials) $\left( i_{B}(n,k)\right)_{n}$ (resp.  $\left( i_{B_q}(n,k)\right)_{n}$) log-concave (resp. strongly $q$-log concave) in $n$? 

\medskip
Moreover, the sequence $(i_B(n,k))_{k}$ is unimodal. However the number and location of the modes
of this sequence remains a question to be answered. But the answer to this question is not easy to find, so we propose it as follows.

\medskip

\noindent{\bf Question 2.} Find the number and location of modes of the unimodal sequence $(i_B(n,k))_{k}$.

\section*{Acknowledgement}
The authors would like to thank the referees for many valuable remarks and suggestions to
improve the original manuscript. This work was supported by DG-RSDT (Algeria), PRFU Project C00L03UN180120220002 and PRFU Project C00L03UN190120230012..

\end{document}